\newtheorem{Theorem}{Theorem}[section]
\newtheorem{Definition}{Definition}[section]
\newtheorem{Proposition}[Theorem]{Proposition}
\newtheorem{Lemma}[Theorem]{Lemma}
\newtheorem{Corollary}[Theorem]{Corollary}
\newtheorem{Assumption}{Assumption}[section]
\newtheorem{Remark}{Remark}[section]
\newcommand{\xc}[1]{\vspace{.1cm}
\noindent{\em #1}}
\newcommand{\re}{\operatorname{re}}
\newcommand{\R}{\mathbb{R}}
\newcommand{\N}{\mathbb{N}}
\newcommand{\C}{\mathbb{C}}
\newcommand{\rd}{\mathrm{d}}
\newcommand{\bfo}{\mathbf{1}}
\newcommand{\Z}{\mathbb{Z}}
\DeclareMathOperator{\rk}{rk}
\newcommand{\rP}{\mathrm{P}}
\newcommand{\Diag}{\operatorname{Diag}}
\title{Pole Placement and Feedback Stabilization for \\ Discrete Linear Ensemble Systems}
\begin{document}
\author{Xudong Chen\footnote{X.~Chen is with the Department of Electrical and Systems Engineering, Washington University in St. Louis. Email: \texttt{cxudong@wustl.edu}.}
}

\date{}
\maketitle

\begin{abstract}
We consider discrete ensembles of linear, scalar control systems with single-inputs. Assuming that all the individual systems are unstable, we investigate whether there exist linear feedback control laws that can asymptotically stabilize the ensemble system. 
We provide necessary/sufficient conditions for feasibility of pole placement in the left half plane and for feedback stabilizability of the ensemble systems.        
\end{abstract}

\section{Introduction} 
\subsection{Problem formulation}
Let $X$ be a Banach sequence space in $\C$. In this paper, $X$ will be either $\ell^p$, for $1\leq p\leq \infty$, or, $c$ the space of convergence sequences, or, $c_0$ the space of null sequences, i.e., sequences that converge to $0$.      
We consider discrete ensembles of linear, scalar control systems with single-inputs: 
\begin{equation}\label{eq:systeminfinite}
    \dot x_n(t) = a_n x_n(t) + b_n u(t), \quad \mbox{for } n \in \N, 
\end{equation}
where $a:= (a_n)\in \ell^\infty$, $b:= (b_n) \in X$, $x(t):= (x_n(t))\in X$, and $u(t)$ is complex valued and locally integrable. 
For the case $X = c$, we further require that $a\in c$.  
Let $A: X \to X$ be the diagonal operator given by $A: (x_n) \mapsto (a_n x_n)$. Then, one can  re-write  system~\eqref{eq:systeminfinite} simply as follows:
\begin{equation}\label{eq:openloopsystem}
\dot x(t) = A x(t) + b u(t).
\end{equation} 
Let $X^*$ be the dual space of $X$. 
A {\em linear feedback control law} takes the form  
$u(t) = k x(t)$, for some $k\in X^*$.  
Correspondingly, the feedback system is given by 
\begin{equation}\label{eq:feedbacksystem}
\dot x(t) = T_k \, x(t), \quad \mbox{where } T_k:= A + bk.  
\end{equation} 
We have the following definition:

\begin{Definition}
    System~\eqref{eq:feedbacksystem} is {\bf stable} if there exists a constant $C > 0$ such that for any initial condition $x(0)$, we have $\|x(t)\|_X \leq C \|x(0)\|_X$. System~\eqref{eq:feedbacksystem} is {\bf asymptotically stable} if it is stable and, moreover, for any initial condition $x(0)\in X$, we have $\lim_{t\to\infty} x(t) = 0$. 
\end{Definition}

\begin{Remark}\label{rmk:strongstabilitysemigroup}
\normalfont
The operator $T_k$ generates the uniformly continuous semigroup $(\exp(T_k t))$, for $t \geq 0$. The above definition of system stability is closely related to the stability notions for semigroups (see, e.g.,~\cite[Chapter V]{engel2000one}). Specifically, the semigroup $(\exp(T_k t))_{t \geq 0}$ is said to be {\em strongly stable} if for any $x\in X$, we have $\lim_{t\to\infty } \|\exp(T_kt)x\|_X = 0$. Thus, system~\eqref{eq:feedbacksystem} is asymptotically stable if and only if the associated semigroup $(\exp(T_k t))_{t\geq 0}$ is bounded and strongly stable.     \hfill{\qed}
\end{Remark}

Let $\Sigma(T_k)$ be the spectrum of $T_k$. It should be clear that 
a necessary condition for~\eqref{eq:feedbacksystem} to be (asymptotically) stable is that 
\begin{equation}\label{eq:necessarycondition}
\Sigma(T_k) \subseteq H:= \{z \in \C \mid \re(z) \leq 0\}.
\end{equation}
The two main questions we address in this paper are the following: 
\begin{description}
\item[\it 1. Pole placement:] {\em Is there a $k\in X^*$ such that~\eqref{eq:necessarycondition} can be satisfied?} 
\item[\it 2. Feedback stabilization:] {\em If the answer to the first question is affirmative, then is there a $k\in X^*$ such that~\eqref{eq:feedbacksystem} is asymptotically stable?} 
\end{description}

The ensemble system~\eqref{eq:systeminfinite} being {\em discrete} 
is essentially necessary for the above questions to have positive solutions. To wit, consider, e.g., the following {\em continuum} linear ensemble system $\dot x(t, a) = a x(t, a) + u(t)$, for $a\in [0,1]$. In this case, $A: \mathrm{L}^p ([0,1],\R)\to \mathrm{L}^p ([0,1],\R)$ is the multiplication operator $A: f(x) \mapsto  a f(a)$, with $\Sigma(A) = [0,1]$, and $b$ is the constant function $\bfo$. Since there is no compact operator $K$ such that $\Sigma(A + K) \subseteq H$ (see, e.g.,~\cite[\S 5.2]{curtain2012introduction}), there is no linear feedback control law that can render the closed-loop system stable. This observation can be formalized and extended, with mild efforts, to more general cases.   
For other relevant works about continuum linear ensemble systems, we refer the reader to~\cite{helmke2014uniform,li2015ensemble,dirr2021uniform,chen2021sparse,chen2023controllability} and references therein. We further mention~\cite{li2006control,chen2019structure,chen2020ensemble} for studies of continuum nonlinear ensemble systems. 

For system~\eqref{eq:systeminfinite}, if there are only finitely many  $a_n$'s with positive real parts, say $\re(a_n) > 0$, for $n = 1,\ldots, N$, and $\re(a_n) \leq 0$ for all $n > N$, then the problem of feedback stabilization is reduced to the finite-dimensional case. Specifically, let $A':= \Diag(a_1,\ldots, a_N)$ and $b' := (b_1, \ldots, b_N)$. The pair $(A',b')$ is controllable if and only if 
the $a_n$'s, for $1\leq n \leq N$, are pairwise distinct and the $b_n$'s, for $1\leq n \leq N$, are nonzero. 
On one hand, if $(A',b')$ is not controllable, then one can show that for any $k\in X^*$, $\Sigma(T_k)$ contains $a_n$ for some $n = 1,\ldots, N$ and hence,~\eqref{eq:necessarycondition} cannot be satisfied. On the other hand, if $(A',b')$ is controllable, then it is well known that there exists a row vector $k'\in \R^N$ such that the eigenvalues of $(A' + b'k')$ can be placed in the interior of $H$. 
Then, if we let $k\in X^*$ be such that
$k: x \mapsto \sum_{n = 1}^N k'_n x_n$, then one can show that the feedback system~\eqref{eq:feedbacksystem} is stable (and is asymptotically stable if $a_n < 0$ for all $n > N$). 

In this paper, we take the following assumption: 

\begin{Assumption}\label{asmp:positivean}
All the $a_n$'s are positive real numbers. 
\end{Assumption}

Although the ensemble system~\eqref{eq:systeminfinite}, with Assumption~\ref{asmp:positivean}, is simple, the questions of pole placement and of feedback stabilization are nontrivial. The main results are formulated in Section~\S\ref{sec:mainresult}, where we provide partial solutions to these two questions.

\subsection{Literature review} 
Although feedback stabilization is a central topic in control theory, the literature is very sparse for ensemble systems. 
For relevant works, we mention~\cite{chittaro2018asymptotic} in which the authors addressed the problem of feedback stabilizing a discrete ensemble of Bloch equations, with the target state being the south pole $v := (0,0,-1)$ for all individual systems. 
They have shown that for any given error tolerance $\epsilon > 0$, there exist a positive integer $N$, a feedback control law which relies only on the current states $x_n(t)\in S^2$ of the first $N$ individual systems, and a time $T$ such that if the initial condition $x(0)$ belongs to some residual set, then the solution $x(t) = (x_n(t))$  of the ensemble system generated by the control law satisfies $\sum_{n = 1}^\infty 2^{-n} \|x_n(t) - v\| \leq \epsilon$ for all $t \geq T$. 
Other relevant works but for {\em finite} ensemble systems include, e.g.,~\cite{ryan2014simultaneous} where the author proposed a feedback control law for stabilizing a finite ensemble of oscillators, and~\cite{guth2023ensemble} where the authors investigated a Riccati based feedback mechanism for stabilizing a finite ensemble of linear systems.

Beyond ensembles of control systems described by ordinary differential equations,   
we mention~\cite{alleaume2023ensembles} where the authors addressed the problem of stabilizing an infinite ensemble of hyperbolic partial differential equations (PDEs). Although the stabilization problems addressed are similar in spirit, the results and the techniques (generalization of PDE backstepping to infinite ensembles) used in~\cite{alleaume2023ensembles} are different from ours.  

Note that the operator $T_k$ is a rank-one perturbation of $A$. Finite rank perturbations of linear operators have been extensively investigated in the literature. However, to the best of the author's knowledge, there does not seem to have solutions to our questions. Perhaps the closest works to ours are~\cite{dobosevych2021spectra,dobosevych2021direct}, where the authors investigated the    
spectra of rank-one perturbations of unbounded self-adjoint operators and the associated pole placement problem.  
Specifically, the authors assumed that there is a constant $d > 0$ such that $a_{n + 1} - a_n \geq d$ for all $n\in \mathcal{I}$, with $\mathcal{I} = \N$ or $\mathcal{I} = \Z$, and both $b$ and $k$ are free to choose. Under this assumptions, they showed that $\{\lambda_n \mid n\in \mathcal{I}\}$ can be the spectrum of $(A + bk)$, for some $b\in X$ and $k\in X^*$, if and only if $\sum_{n \in \mathcal{I}}|\lambda_n - a_n| < \infty$. Their setting is different from ours and, consequently, the necessary and/or sufficient conditions for feasibility of pole placement will be different. For example, we will see in Theorem~\ref{thm:theoremnecessary} that $A$ is necessarily a compact operator. Moreover, in item~1 of Theorem~\ref{thm:necessaryandsufficient}, we will see that the condition $\sum_{n = 1}^\infty a_n < \infty$ is not sufficient for the existence of $b\in X$ and/or $k\in X^*$ such that $\Sigma(A + bk) \subseteq H$. 

Also, as mentioned in Remark~\ref{rmk:strongstabilitysemigroup},  
asymptotic stability of system~\eqref{eq:feedbacksystem} is closely related to strong stability of the semigroups $(\exp(T_k t))_{t\geq 0}$, which has also been extensively investigated. See the recent paper~\cite{chill2020semi}, the survey paper~\cite{chill2007stability}, the book~\cite{engel2000one}, and the references therein. However, many existing results, such as the Arendt-Batty-Lyubich-V\~{u} (ABLV) theorem, assume that the semigroup $(\exp(T_kt))_{t\geq 0}$ is bounded (which implies that the system~\eqref{eq:feedbacksystem} is stable). Upon this hypothesis, the ABLV Theorem and its variations provide sufficient conditions for strong stability of the semi-group. In this paper, we do not make such an assumption, i.e., we do not assume that system~\eqref{eq:feedbacksystem} is stable. But rather, we investigate when this assumption can be satisfied. 

Finally, we mention the problem of {\em simultaneous stabilization}, which has been addressed by Sontag~\cite{sontag1985introduction}, Ghosh~\cite{ghosh1985some}, Blondel, etc.,~\cite{blondel1993sufficient}, Tannenbaum~\cite{tannenbaum2006invariance} and many others. 
We point out that simultaneous stabilization is  different from ensemble feedback stabilization. 
The former deals with the problem of finding a common (or parameter dependent) feedback gain~$k$ such that every individual  linear system $\dot x_\sigma(t) = (A_{\sigma} + b_\sigma k) x_\sigma(t)$, obtained by closing its loop with the feedback control law $u_\sigma(t) = k x_\sigma(t)$, is asymptotically stable, where $\sigma$ is the parameter. 
Note that the control inputs $u_\sigma(t) = kx_\sigma(t)$ for different individual systems are allowed to be {\em different} from each other; in fact, all the closed-loop systems are {\em completely decoupled}. 
This is in contrast with the setting considered in this paper: For ensemble feedback stabilization, we will have to use the {\em same} feedback control law $u(t) = kx(t)$ for all individual systems. 
Given Assumption~\ref{asmp:positivean}, it is necessary that this common control input integrates (in a linear way) the information of all the individual systems, and the resulting feedback ensemble system is coupled through this feedback control law.

\subsection{Notation} 
We gather here key notations used throughout the paper. 

Denote by $\N$ the set of positive integers and $\N_0$ the set of nonnegative integers.  

We use $x = (x_n)$, for $n \in \N$, to denote an infinite sequence. The entries $x_n$'s are complex numbers.  
For $1\leq p < \infty$, let $\ell^p$ be the space of all sequences $x = (x_n)$ such that $\sum_{n = 1}^\infty |x_n|^p < \infty$. 
Let $\ell^\infty$ be the space of bounded sequences, $c\subset \ell^\infty$ be the space of convergent sequences, $c_0\subset c$ be the space of sequences that converge to $0$, and $c_{00}\subset c_0$ be the space of eventually zero sequences, i.e., sequences that have only finitely many nonzero entries. Denote by $\|\cdot \|_{\ell^p}$ the $\ell^p$-norm for all $1\leq p\leq \infty$. We equip $c$, $c_0$, and $c_{00}$ the $\ell^\infty$-norm.

Let $\mathcal{B}(X)$ be the space of all bounded linear operators from $X$ to $X$. Denote by $\|\cdot \|_{\mathcal{B}(X)}$ the operator norm. 
For a given $T\in \mathcal{B}(X)$, let $\Sigma(T)$ be the {\em spectrum} of $T$, $\rP(T):= \C \backslash\Sigma(T)$ be the {\em resolvent set}, and  $R(z,T) := (T - z)^{-1}$, for $z \in \rP(T)$, be the {\em resolvent} of $T$. 
A point $\sigma\in \Sigma(T)$ is {\em discrete} if it is isolated and if the rank of the corresponding Riesz projector $P_\sigma := - \frac{1}{2\pi \mathrm{i}}\oint_{\Gamma} R(z, T)\rd z$  is finite, where $\Gamma$ is a closed rectifiable curve in $\rP(T)$ enclosing only the point $\sigma$.    
Further, let $\Sigma_{\rm disc}(T)$ be the {\em discrete spectrum} of $T$, and $\Sigma_{\rm ess}(T):= \Sigma(T) \backslash \Sigma_{\rm disc}(T)$ be the {\em essential spectrum} of $T$.   

We use $\bfo$ to denote the vector/sequence of all ones or a constant function valued at $1$, and $I$ to denote either the identity matrix or the identity operator. 
Given a finite or an infinite sequence of complex numbers $a_1, a_2, \cdots$, we let $\Diag(a_1, a_2, \cdots)$ be the diagonal matrix or the diagonal operator, with $a_i$ the $ii$th entry.

\section{Main Results}\label{sec:mainresult} 
In this section, we present conditions that are necessary/sufficient for feasibility of placing the poles in the left half plane and for asymptotic  stability of the feedback system.   
We start with the following result: 

\begin{Theorem}\label{thm:theoremnecessary}
Let $(a_n) \in \ell^\infty$, with $a_n > 0$ for $n\in \N$, and $(b_n)\in X$. 
Suppose that there is a $k\in X^*$ such that~\eqref{eq:necessarycondition} is satisfied; then, the following hold: 
\begin{enumerate}
\item $(a_n)\in c_0$ and, moreover, $a_n \neq a_m$ for $n\neq m$; 
\item $b_n\neq 0$ for all $n\in \N$.  
\end{enumerate}
\end{Theorem}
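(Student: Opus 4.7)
The plan is to assume each claim fails and exhibit a strictly positive real number in $\Sigma(T_k)$, contradicting $\Sigma(T_k)\subseteq H$. Throughout I would work with the adjoint, since the eigenfunctionals of $A$ on the dual side are the easily available coordinate functionals $e_m^*:x\mapsto x_m$, which lie in $X^*$ for every space $X$ considered in the paper. A direct calculation gives
\[ T_k^*\varphi = A^*\varphi + \varphi(b)\,k \qquad (\varphi\in X^*),\]
so $T_k^*$ is again a rank-one perturbation of a diagonal operator. From this I would read off item 2 immediately: if $b_m=0$ for some $m$, then $T_k^* e_m^* = a_m e_m^*$, placing $a_m>0$ in $\Sigma(T_k^*) = \Sigma(T_k)$. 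The distinctness half of item 1 follows in the same spirit: if $a_m=a_n$ with $m\neq n$, I would test on $\varphi := b_n e_m^* - b_m e_n^*$; the computation $T_k^*\varphi = a_m\varphi + (b_n b_m - b_m b_n)\,k = a_m\varphi$ again places $a_m>0$ in the spectrum, and the test functional $\varphi$ is nonzero because $b_m,b_n\neq 0$ by item 2, which was just established.

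For the remaining $c_0$ half of item 1, my strategy is to invoke invariance of the essential spectrum under compact perturbations. The perturbation $bk$ has rank one and is therefore compact, so $\Sigma_{\rm ess}(T_k) = \Sigma_{\rm ess}(A)$. Using that the $a_n$'s are now known to be pairwise distinct, I would verify that each accumulation point $\alpha$ of $\{a_n\}$ lies in $\Sigma_{\rm ess}(A)$: indeed $A-\alpha$ fails to have closed range, as witnessed by testing on basis vectors $e_{n_k}$ for which $a_{n_k}\to\alpha$, so $\alpha$ cannot be a discrete point of $\Sigma(A)$. Consequently $\alpha\in\Sigma_{\rm ess}(T_k)\subseteq\Sigma(T_k)\subseteq H$. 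Since $a_n>0$ forces $\alpha\geq 0$ and $H\cap[0,\infty) = \{0\}$, we must have $\alpha=0$; thus $0$ is the only accumulation point of the bounded sequence $(a_n)$, whence $(a_n)\in c_0$.

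The main obstacle I anticipate is this last step: in the Banach setting there are several inequivalent notions of essential spectrum, and the invariance result must be stated carefully. I would fix the paper's definition $\Sigma_{\rm ess}:= \Sigma\setminus\Sigma_{\rm disc}$ and verify two facts separately, namely that every non-isolated point of $\Sigma(A)$ lies in $\Sigma_{\rm ess}(A)$, and that $\Sigma_{\rm ess}$ is preserved under compact (hence rank-one) perturbations; both follow from standard Fredholm-theoretic arguments, though one must track the ranks of the Riesz projectors $P_\sigma$ attached to any candidate discrete point. By contrast, the adjoint eigenvector computations yielding item 2 and the distinctness of the $a_n$'s are elementary and apply uniformly across all the sequence spaces $X$ listed in the paper.
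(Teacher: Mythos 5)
Your proposal is correct, but it reaches items~1 (distinctness) and~2 by a genuinely different route than the paper. The paper runs everything through the Weinstein--Aronszajn formula: since each $a_n>0$ lies in $\rP(T_k)$, the identity $0=\gamma_k(a_n)=\gamma_0(a_n)+\delta_k(a_n)$ forces the eigenvalue multiplicity $\gamma_0(a_n)$ to be $1$ and forces $a_n$ to be a pole of $h_k(z)=1+k(A-z)^{-1}b$, whose residue at $a_n$ is a nonzero multiple of $b_n$; this yields distinctness and $b_n\neq 0$ in one stroke. You instead exhibit explicit eigenfunctionals of $T_k^*$ (namely $e_m^*$ when $b_m=0$, and $b_ne_m^*-b_me_n^*$ when $a_m=a_n$) and use $\Sigma(T_k^*)=\Sigma(T_k)$; this is more elementary, avoids the meromorphic-function machinery entirely, and works verbatim on every $X$ in the paper (including $\ell^\infty$, where $X^*$ is not a sequence space but the coordinate functionals still live in it). What the W--A route buys in exchange is quantitative multiplicity information (each $a_n$ is a simple pole of $h_k$, i.e.\ $k_nb_n\neq 0$) that the paper reuses later; your argument only certifies the spectral inclusion needed for the theorem. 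For the $c_0$ part both proofs rest on invariance of the essential spectrum under the rank-one perturbation, and here one caveat: for the paper's Browder-type definition $\Sigma_{\rm ess}=\Sigma\setminus\Sigma_{\rm disc}$, invariance under general compact perturbations is delicate on Banach spaces, so you should route the argument exactly as your closed-range observation suggests --- an accumulation point $\alpha$ of $(a_n)$ makes $A-\alpha$ non-Fredholm (non-closed range, or infinite-dimensional kernel if $\alpha$ repeats), Fredholmness is stable under the compact perturbation $bk$, hence $T_k-\alpha$ is not invertible and $\alpha\in\Sigma(T_k)\subseteq H$, forcing $\alpha=0$. Stated that way, your proof is complete.
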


\begin{Remark}\normalfont
    System~\eqref{eq:openloopsystem} is said to be {\em approximately controllable} if for any initial condition $x(0)\in X$, any target $x^*\in X$, any $T > 0$, and any error tolerance $\epsilon > 0$, there exists an integrable function $u:[0,T]\to \C$ such that the solution $x(t)$ generated by~\eqref{eq:openloopsystem} satisfies $\|x(T) - x^*\|_X < \epsilon$. 
    We claim that if $X = c_0$ or $X = \ell^p$, for $1\leq p < \infty$, and if the two items of Theorem~\ref{thm:theoremnecessary} are satisfied, then system~\eqref{eq:openloopsystem} is approximately controllable. 
    To wit, let $k = (k_n)\in X^*$ be such that $kA^m b= 0$, for all $m\in \N_0$. Note that $\omega := (\omega_n:= k_nb_n)\in \ell^1$ and satisfies 
    \begin{equation}\label{eq:omegasolution}
    \sum_{n = 1}^\infty \omega_n a_n^m = 0, \quad \mbox{for all } m\in \N_0.
    \end{equation}
    Since $a\in c_0$ and the $a_n$'s are pairwise distinct, it is known~\cite[Theorem 4]{wermer1952invariant} that $\omega = 0$ is the only solution to~\eqref{eq:omegasolution}. Since $b_n\neq 0$ for all $n\in \N$, we must have that $k = 0$. Thus, the linear span of $A^mb$, for $m\in \N_0$, is a dense subspace of $X$.  By~\cite{triggiani1975controllability},  system~\eqref{eq:openloopsystem} is approximately controllable. \hfill{\qed}
\end{Remark}

For the remainder of the section, we assume that the two items of Theorem~\ref{thm:theoremnecessary}  are satisfied.  
We also assume, without loss of generality, that $(a_n)$ is strictly monotonically decreasing. 
By item~1 of Theorem~\ref{thm:theoremnecessary}, $A$ is a compact operator. Since $T_k$ is a rank-one perturbation of $A$, $T_k$ is compact as well. We  introduce the following definition: 

\begin{Definition}
    A sequence $(\lambda_n)\in c_{0}$ is {\bf feasible} if there is a $k\in X^*$ such that 
    \begin{equation}\label{eq:feasibilityofpoles}
    \Sigma(T_k) = \{\lambda_n \mid n\in \N_0\}, \quad \mbox{where } \lambda_0:= 0.
    \end{equation}
\end{Definition}

Let $c_{H}:= \{(\lambda_n) \in c_{0} \mid \lambda_n \in H \}$.  
We provide below a sufficient condition for $\lambda\in c_H$ to be feasible. 
Note that if $X = c_0$, then $X^* = \ell^1$ and if $X = \ell^p$ for $1\leq p < \infty$, then $X^* = \ell^q$, where $1 < q \leq \infty$ is such that $1/p + 1/q = 1$. In either of these two cases, we can express $k = (k_n) \in X^*$  as 
\begin{equation}\label{eq:generallinearform}
k: x \mapsto kx = \sum_{n = 1}^\infty k_n x_n.
\end{equation}
However, if $X = \ell^\infty$, then $X^*$ contains $\ell^1$ as a proper subspace. In particular, not every $k\in \ell^{\infty *}$ takes the form~\eqref{eq:generallinearform}. For the sufficient condition (Theorem~\ref{thm:main1}) presented below and the consequent results, we will focus only on elements $k\in X^*$ of type~\eqref{eq:generallinearform}.  

To this end, for each $\lambda\in c_H$, we define the sequence $k(\lambda) = (k_n(\lambda))$ as follows: 
\begin{equation}\label{eq:defklambda1}
k_n(\lambda) := -\frac{(a_n - \lambda_n)}{b_n} \prod_{m = 1, m\neq n}^\infty \frac{1 - \lambda_m/a_n}{1 - a_m/a_n}, \quad \mbox{for } n \in \N.
\end{equation} 
The entries $k_n(\lambda)$'s are well defined if the infinite products converge for all $n\in \N$, which can be satisfied if both $(a_n)$ and $(\lambda_n)$ belong to $\ell^1$. However, this condition does not guarantee that $k(\lambda)$ is bounded, not to mention being an element of $X^*$, i.e., being such that 
\begin{equation*}\label{eq:klambdainl1}
k(\lambda): (x_n) \in X \mapsto \sum_{n = 1}^\infty k_n(\lambda) x_n \in \C, 
\end{equation*}
is well defined. 
We will soon provide necessary/sufficient conditions for $k(\lambda)\in X^*$. Before that, we have the following result:

\begin{Theorem}\label{thm:main1} 
Let $\lambda\in c_H$ and $k(\lambda)$ be given as in~\eqref{eq:defklambda1}. 
If $k(\lambda)\in X^*$, then $\lambda$ is feasible and $$\Sigma_{T(k(\lambda))} = \{\lambda_n \mid n\in \N_0\}.$$  
\end{Theorem}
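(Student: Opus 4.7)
The plan is to exploit that $T_{k(\lambda)}$ is a rank-one perturbation of the compact diagonal operator $A$, and to characterize its spectrum via a scalar characteristic function that plays the role of the matrix-determinant lemma in finite dimensions. Since $(a_n)\in c_0$, $A$ is compact on $X$; the hypothesis $k(\lambda)\in X^*$ makes $bk(\lambda)$ a bounded rank-one operator, so $T_{k(\lambda)}$ is compact, and $\Sigma(T_{k(\lambda)})$ consists of $\{0\}$ together with an at most countable set of eigenvalues whose only possible accumulation point is $0$ (and $0$ belongs automatically since $X$ is infinite-dimensional, giving $\lambda_0=0\in\Sigma(T_{k(\lambda)})$). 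For $z\notin\{0\}\cup\{a_n\}_n$, the factorization $T_{k(\lambda)}-zI=(A-zI)\bigl(I+(A-zI)^{-1}bk(\lambda)\bigr)$ combined with the identity-plus-rank-one invertibility criterion yields
$$z\in\Sigma(T_{k(\lambda)})\quad\Longleftrightarrow\quad F(z):=1+k(\lambda)(A-zI)^{-1}b=1+\sum_{n=1}^{\infty}\frac{k_n(\lambda)\,b_n}{a_n-z}=0,$$
with eigenvector proportional to $(b_n/(a_n-z))_n\in X$. Under $k(\lambda)\in X^*$ the series converges absolutely and uniformly on compacta of $\C\setminus\{a_n\}_n$, so $F$ is meromorphic with simple poles at the $a_n$'s and residues $-k_n(\lambda)b_n$.

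The heart of the proof is to identify the zero set of $F$ with $\{\lambda_n\}_{n\geq 1}$. Rewriting the factor $(1-\lambda_{m}/a_n)/(1-a_m/a_n)$ as $(a_n-\lambda_m)/(a_n-a_m)$ in~\eqref{eq:defklambda1} gives
$$-k_n(\lambda)b_n\;=\;\prod_{m}(a_n-\lambda_m)\Big/\prod_{m\neq n}(a_n-a_m),$$
which is exactly the residue at $a_n$ of the formal product $\prod_n(z-\lambda_n)/(z-a_n)$. The cleanest route is therefore to establish the identity
$$F(z)\;=\;\prod_{n=1}^{\infty}\frac{z-\lambda_n}{z-a_n}\qquad\text{on }\C\setminus\bigl(\{a_n\}\cup\{\lambda_n\}\bigr):$$
both sides are meromorphic with matching simple-pole residues at each $a_n$, both tend to $1$ as $|z|\to\infty$, so after a removable-singularity check their difference is a bounded entire function vanishing at infinity, hence is $\equiv 0$ by Liouville. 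This immediately gives that the zeros of $F$ are precisely $\{\lambda_n\}_{n\geq 1}$. An equivalent route, which I may use as a backup, is to verify $F(\lambda_m)=0$ directly by recognizing the resulting sum as the $N\to\infty$ limit of the Lagrange-interpolation identity $\sum_{n=1}^{N}\prod_{m'\neq m}(a_n-\lambda_{m'})\big/\prod_{n'\neq n}(a_n-a_{n'})=1$ (the leading coefficient of the degree-$(N{-}1)$ polynomial $\prod_{m'\neq m}(z-\lambda_{m'})$).

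It remains to rule out the candidates $\{a_m\}$. A direct analysis of the eigenequation $(A-a_mI+bk(\lambda))x=0$ shows that $a_m\in\Sigma(T_{k(\lambda)})$ iff $k_m(\lambda)=0$. But~\eqref{eq:defklambda1} immediately yields $k_m(\lambda)\neq 0$: because $\re(\lambda_n)\leq 0<a_m$ for every $n$, the prefactor $-(a_m-\lambda_m)/b_m$ is nonzero and each factor $(1-\lambda_{n'}/a_m)/(1-a_{n'}/a_m)$ in the infinite product is nonzero. This removes the $a_m$'s from the spectrum and completes the identification $\Sigma(T_{k(\lambda)})=\{\lambda_n\mid n\in\N_0\}$.

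The main obstacle is the analytic groundwork behind the product identity: one must extract from the hypothesis $k(\lambda)\in X^*$, via the explicit formula~\eqref{eq:defklambda1}, enough summability of the sequence $(a_n-\lambda_n)$ to guarantee uniform convergence of $\prod_n(z-\lambda_n)/(z-a_n)$ on compacta away from $\{a_n,\lambda_n\}$, and then to carry out the Liouville-type comparison (or, equivalently, the $N\to\infty$ limit of the Lagrange identity) rigorously. The algebraic skeleton of the argument, namely residue matching and Lagrange interpolation at the nodes $a_n$, is a direct extension of the finite-dimensional matrix-determinant calculation; it is the passage from finite-dimensional bookkeeping to genuinely infinite sums and products, justified only by the compactness of $A$ and the continuity of $k(\lambda)$, that requires the bulk of the work.
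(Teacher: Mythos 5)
Your route through the characteristic function $F(z)=1+k(\lambda)(A-zI)^{-1}b$ is genuinely different from the paper's: the paper instead truncates to $k(\lambda;N)\in c_{00}$ built from Ackermann's formula, proves $\|k(\lambda;N)-k(\lambda)\|_{\ell^q}\to 0$, reads off $\Sigma(T_{k(\lambda;N)})$ from a block-triangular structure, and passes to the limit using Kato's continuity of finite systems of eigenvalues. Your framing has real advantages (it treats all choices of $X$ uniformly, whereas the paper needs a separate argument for $X=\ell^1$, and the rank-one determinant criterion cleanly disposes of the candidates $a_m$). However, the central step --- the identity $F(z)=\prod_n(z-\lambda_n)/(z-a_n)$ --- is where the entire difficulty of the theorem lives, and neither of your two sketched justifications closes it as written.

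The Liouville argument fails at $z=0$. After matching residues at each $a_n$, the function $F-G$ extends analytically only to $\C\setminus\{0\}$, because the poles $a_n$ accumulate at the origin; you are left with an isolated singularity at $0$ whose nature you never control. ``Bounded entire function vanishing at infinity'' is not available: a priori $F-G$ could behave like $e^{1/z}-1$, which is analytic on $\C\setminus\{0\}$ and tends to $0$ at infinity without vanishing identically. To rule this out you would need an explicit bound on $F-G$ in a punctured neighborhood of $0$, which is not obviously easier than the identity itself. Your backup route --- verifying $F(\lambda_m)=0$ as the $N\to\infty$ limit of the Lagrange interpolation identity --- only yields the inclusion $\{\lambda_m\}\subseteq\Sigma(T_{k(\lambda)})$; it does not exclude additional zeros of $F$, so it does not give $\Sigma(T_{k(\lambda)})=\{\lambda_n\mid n\in\N_0\}$. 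The viable repair is to prove $F=G$ as a locally uniform limit of the finite partial-fraction identities $1+\sum_{n=1}^N\frac{k_n(\lambda;N)b_n}{a_n-z}=\prod_{n=1}^N\frac{z-\lambda_n}{z-a_n}$, but that requires precisely the quantitative convergence $k(\lambda;N)\to k(\lambda)$ (the content of the paper's Proposition on convergence of the truncated gains, whose proof hinges on the bounds $|r_n(N)|<1$ coming from $\re(\lambda_m)\leq 0$ and $0<a_m<a_n$). You explicitly defer this estimate as ``the bulk of the work,'' so the proposal as it stands establishes the algebraic skeleton but not the theorem.
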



We will now present conditions that are either necessary or sufficient for  $k(\lambda) \in X^*$. A real sequence $(x_n)$, not necessarily bounded, is said to be {\em eventually monotonically decreasing} (resp., {\em eventually monotonically increasing}) if there exists an $N\in \N$ such that $(x_n)$ is monotonically decreasing (resp. monotonically increasing) for $n\geq N$.    

\begin{Theorem}\label{thm:necessaryandsufficient}
The following hold: 
\begin{enumerate}
\item If there exists a $d < 2$ such that $(n^d a_n)_{n\in \N}$ is eventually monotonically increasing, then, regardless of $b\in X$, there does not exist any $\lambda \in c_H$ such that $k(\lambda)\in \ell^\infty$.
\item If there exists a $d > 2$ such that $(n^d a_n)_{n \in \N}$ is eventually monotonically decreasing and if $$\limsup\limits_{n\to\infty}\frac{1}{n}\ln (a_n/|b_n|) \leq 0,$$ 
then for any $\lambda \in c_H$ such that $\lim_{n\to\infty}\lambda_n/a_n = 0$, we have $k(\lambda) \in \ell^1$.
\end{enumerate}
\end{Theorem}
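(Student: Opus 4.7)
The plan is to analyze
\[
\log|k_n(\lambda)| = \log|a_n - \lambda_n| - \log|b_n| + L_n + N_n,
\]
where $L_n := \sum_{m \neq n} \log \tfrac{1}{|1 - a_m/a_n|}$ is the ``$\lambda$-free'' contribution and $N_n := \sum_{m \neq n} \log|1 - \lambda_m/a_n|$. Because $\re(\lambda_m) \leq 0$ and $a_n > 0$, each summand of $N_n$ is non-negative, so $N_n \geq 0$ uniformly in $\lambda \in c_H$. The central asymptotic I would establish is that $L_n/n \to -\pi \cot(\pi/d)$ as $n \to \infty$: this limit is strictly positive for $d < 2$, zero at $d = 2$, and strictly negative for $d > 2$. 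The critical behavior at $d = 2$ matches the exact identity $\prod_{m \neq n} \frac{m^2}{m^2 - n^2} = 2(-1)^{n-1}$. One obtains the asymptotic by approximating $L_n$ by $n \int_0^\infty -\log|1 - u^{-d}| \, du$ via a Riemann-sum estimate---using the assumed monotonicity of $(n^d a_n)$ to pin $a_m/a_n$ against $(n/m)^d$---and evaluating the integral via the substitutions $v = u^{\mp d}$ together with the digamma reflection formula $\psi(1-x) - \psi(x) = \pi \cot(\pi x)$.

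For Part 1, I would argue by contradiction: assume $k(\lambda) \in \ell^\infty$ for some $\lambda \in c_H$. Since $b \in X \subseteq \ell^\infty$ and $|a_n - \lambda_n| \geq a_n$, the bound $|k_n| \leq \|k\|_{\ell^\infty}$ forces $a_n |P_n| \leq C$ for some constant $C$. On the other hand $|P_n| \geq e^{L_n}$, and for $d < 2$ the asymptotic gives $L_n \geq cn$ eventually for some $c > 0$, while the increasing monotonicity of $(n^d a_n)$ yields $a_n \geq C'/n^d$. Combined, $a_n|P_n| \geq (C'/n^d) e^{cn} \to \infty$, a contradiction. (For $d \leq 1$ the product $P_n$ fails to converge outright, giving the conclusion trivially.)

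For Part 2, I use $|a_n - \lambda_m| \leq a_n + |\lambda_m|$ to get $\log|k_n| \leq \log(2 a_n / |b_n|) + L_n + N_n$ eventually (using $\lambda_n/a_n = o(1)$ to absorb a factor of $2$). The hypothesis $\log(a_n / |b_n|) = o(n)$ reduces the task to showing $N_n = o(n)$, since then $\log|k_n| \leq -c'n/2$ eventually for $d > 2$. Define $S_n := \{m : |\lambda_m| \geq a_n\}$; the hypothesis $\epsilon_m := |\lambda_m|/a_m \to 0$ combined with the decreasing monotonicity of $(n^d a_n)$ forces $|S_n|/n \to 0$ via the implicit inequality $|S_n| \leq n\, \epsilon_{|S_n|}^{1/d}$. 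I split $N_n$ into the sum over $S_n$, controlled above by $O(|S_n|\log(n/|S_n|)) = o(n)$ via $x \log(1/x) \to 0$, and the sum over the complement, bounded above by $\sum_{m \notin S_n} |\lambda_m|/a_n$ and estimated $o(n)$ by scale-by-scale bounds exploiting the decay of $\epsilon_m$. This gives $|k_n| \leq e^{-c'n/3}$ eventually, so $k(\lambda) \in \ell^1$.

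The main obstacle is the rigorous justification of the asymptotic $L_n/n \to -\pi\cot(\pi/d)$ under only the one-sided monotonicity hypothesis: the integrand $-\log|1 - u^{-d}|$ has an integrable but delicate logarithmic singularity at $u = 1$, and the Riemann-sum error must be controlled uniformly in the near-diagonal range where $m \approx n$. A secondary challenge is the $o(n)$ bound on $N_n$ for arbitrary $\lambda$ satisfying only $\lambda_n/a_n = o(1)$, since $|\lambda_m|$ need not be monotone and the splitting must be robust to this.
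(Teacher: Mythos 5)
Your Part~1 is essentially the paper's argument. The paper works with $\alpha_n := \frac1n\sum_{m\neq n}\ln|1-a_m/a_n|$ (so $L_n=-n\alpha_n$), proves via exactly your Riemann-sum comparison (Proposition on $\alpha_n$) that eventual monotonicity of $(n^da_n)$ pins $\alpha_n$ against $\zeta(d):=\int_0^1\ln(1/z^d-1)\,\rd z+\int_1^\infty\ln(1-1/z^d)\,\rd z = d-2d\sum_{m\ge1}(m^2d^2-1)^{-1}$, which by the cotangent partial-fraction expansion is your $\pi\cot(\pi/d)$; it then uses $|1-\lambda_m/a_n|\ge1$ and $a_n\ge\delta n^{-d}$ exactly as you do. One caution: from one-sided monotonicity you only get one-sided bounds ($\limsup\alpha_n\le\zeta(d)$ in the increasing case, $\liminf\alpha_n\ge\zeta(d)$ in the decreasing case), not the two-sided limit $L_n/n\to-\pi\cot(\pi/d)$ you assert; but each item only needs the corresponding one-sided inequality, so this is a recoverable overclaim.

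Part~2 has a genuine gap: your intermediate claim $N_n=o(n)$ is false under the stated hypotheses, so the plan of estimating $L_n$ and $N_n$ separately cannot work. The hypothesis that $(n^da_n)$ is eventually \emph{decreasing} gives only an upper bound $a_n\le Cn^{-d}$; it does not prevent $a_n$ from decaying superpolynomially. Take $a_n=e^{-n^2}$, $b_n=e^{-n^2}/n^2$ (so $\ln(a_n/|b_n|)=2\ln n=o(n)$ and $b\in\ell^1$), and $\lambda_n=-a_n/\ln(n+1)$. Then already the single term $m=1$ gives $\ln|1-\lambda_1/a_n|\approx\ln(a_1/a_n)\approx n^2$, so $N_n\gtrsim n^2$; likewise $|S_n|$ is of order $n$ rather than $o(n)$, since $m\in S_n$ only requires $\epsilon_m\ge a_n/a_m$, which is exponentially small. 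The conclusion of the theorem survives because $L_n$ is simultaneously $\approx-\tfrac23 n^3$ and the blow-ups cancel --- but your separate bounds ($L_n\le -cn$ plus $N_n=o(n)$) do not see this cancellation. The paper's proof handles it by pairing factors: it bounds the ratio $|1-\lambda_m/a_n|/|1-a_m/a_n|\le f_m(a_m/a_n)$ with $f_m(x):=(1+\rho_mx)/|x-1|$, observes that $f_m$ is increasing on $(0,1)$ and decreasing on $(1,\infty)$ so that monotonicity of $(n^da_n)$ yields $f_m(a_m/a_n)\le f_m(n^d/m^d)$, and thereby reduces to the exact power law $a_n=n^{-d}$ --- where your $o(n)$ estimate for the $\lambda$-dependent sum (the paper's $\beta_n\to0$) is valid. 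You should restructure Part~2 around this paired comparison rather than around separate asymptotics for $L_n$ and $N_n$.
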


We have so far provided necessary/sufficient conditions for feasibility of placing the poles of the resolvent of $T_k$ in $H$. In fact, if $(a_n)$ and $(b_n)$ satisfy the condition in item~2 of Theorem~\ref{thm:necessaryandsufficient}, then one can choose a feasible $\lambda = (\lambda_n)$ such that $\re(\lambda_n) < 0$ for all $n\in \N$. However,~\eqref{eq:necessarycondition} alone, or, the condition that $\Sigma_{\rm disc}(T_k)$ is contained in the interior of $H$ does not guarantee stability of feedback system~\eqref{eq:feedbacksystem}. Instead, it only implies that if $\|x(t)\|_{X}$ is unbounded as $t\to\infty$, then $\|x(t)\|_{X}$ cannot grow exponentially fast.

We present below a sufficient condition for existence of a $k\in \ell^1$  that renders system~\eqref{eq:feedbacksystem} asymptotically stable. 
Given $(a_n)$ and $(b_n)$, we introduce two new objects: One is the sequence $\pi = (\pi_n)$ defined as follows:  
\begin{equation}\label{eq:defpi}
\pi_n := 2\prod_{m = 1, m\neq n}^\infty \frac{1 + a_m/a_n}{1 - a_m/a_n}, \quad \mbox{for all } n\in \N.
\end{equation}
Note that $k(-a)$ and $\pi$ are related by $k_n(-a) = - a_n\pi_n/b_n$ for all $n\in \N$.   
The other object is an infinite dimensional matrix $\Phi = [\phi_{ij}]_{1 \leq i, j < \infty}$, with the $ij$th entry $\phi_{ij}$ defined as follows: 
\begin{equation}\label{eq:defphi}
\phi_{ij} := \frac{|b_i/b_j|}{ 1 + a_i/a_j }.
\end{equation}
Note that the diagonal entries of $\Phi$ have the same value, given by $\phi_{ii} = 1/2$ for all $i\in \N$. We say that~$\Phi$ {\em spatially exponentially decays} if there exist constants $C>0$ and $\mu\in (0,1)$ such that 
$$
\phi_{ij} \leq C \mu^{|i - j|}, \quad \mbox{for all } i,j\in \N. 
$$
We have the following result: 

\begin{Theorem}\label{thm:sufficientnew} 
Let $\pi$ and $\Phi$ be given as in~\eqref{eq:defpi} and~\eqref{eq:defphi}, respectively. 
Suppose that $\pi \in \ell^\infty$ and that $\Phi$ spatially exponentially decays; then, $k(-a)\in \ell^1$. Moreover, the feedback system
\begin{equation}\label{eq:feedbacksystem2}
    \dot x(t) = T_{k(-a)} x(t),
\end{equation}
with $X$ the state-space, satisfies the following:   
\begin{enumerate}
\item If $X = c_0$ or if $X = \ell^p$ for $1\leq p < \infty$, then system~\eqref{eq:feedbacksystem2} is asymptotically stable.  
\item If $X = \ell^\infty$ of if $X = c$, then system~\eqref{eq:feedbacksystem2} is stable, but not asymptotically stable.
\end{enumerate}
\end{Theorem}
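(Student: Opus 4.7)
The plan has four logical steps, which I organize into three paragraphs plus a closing comment on the obstacle.

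\emph{Steps 1 and 2: $k(-a)\in\ell^1$ and Riesz projector structure.} From the hypothesis $\phi_{ij}\le C\mu^{|i-j|}$ I extract two exponential bounds. Taking $(i,j)=(1,n)$ and using $a_n\le a_1$ so that $\phi_{1n}=(|b_1|/|b_n|)\cdot a_n/(a_1+a_n)\ge |b_1|a_n/(2a_1|b_n|)$, I obtain $a_n/|b_n|\le (2Ca_1/|b_1|)\mu^{n-1}$; symmetrically at $(n,1)$, $|b_n|\le 2C|b_1|\mu^{n-1}$, so $b\in c_0$ (used later). With $\pi\in\ell^\infty$ and the formula $k_n(-a)=-a_n\pi_n/b_n$, the first bound gives $|k_n(-a)|\le K_0\mu^{n-1}$, so $k(-a)\in\ell^1\subset X^*$. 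By Theorem~\ref{thm:main1} with $\lambda=-a$, $\Sigma(T_{k(-a)})=\{0\}\cup\{-a_n\}_n$ and the right eigenvector for $-a_n$ is $v^{(n)}=(b_m/(a_m+a_n))_m\in X$. A Sherman--Morrison expansion of $R(z,T_{k(-a)})=(A-zI)^{-1}-(A-zI)^{-1}b\otimes k(A-zI)^{-1}/G(z)$ around $z=-a_n$, where $G(z)=1-\sum_m a_m\pi_m/(a_m-z)$, identifies the Riesz projector as the rank-one operator
\[
P_n=v^{(n)}\otimes \tilde w^{(n)},\qquad \tilde w^{(n)}_m=\frac{a_n a_m\pi_n\pi_m}{b_m(a_m+a_n)}.
\]
The pole-placement identities $\sum_m a_m\pi_m/(a_n+a_m)=1$ and (from the $z$-derivative at $z=-a_n$) $\sum_m a_m\pi_m/(a_n+a_m)^2=1/(a_n\pi_n)$, combined with partial fractions, yield biorthogonality $\tilde w^{(j)}(v^{(n)})=\delta_{jn}$.

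\emph{Step 3: Asymptotic stability on $X=c_0$ or $\ell^p$.} The crux is to show $S_N:=\sum_{n=1}^N P_n\to I$ strongly on $X$ with $\sup_N\|S_N\|_{X\to X}<\infty$. A direct partial-fraction computation collapses the entries of $S_N$ to
\[
(S_N)_{jm}=\frac{b_j\,a_m\pi_m}{b_m}\sum_{n=1}^N\frac{a_n\pi_n}{(a_j+a_n)(a_m+a_n)},
\]
whose entry-wise limit recovers the identity: off-diagonal entries vanish via $\sum_n a_n\pi_n/(a_j+a_n)=\sum_n a_n\pi_n/(a_m+a_n)=1$, and the diagonal is $a_j\pi_j\cdot 1/(a_j\pi_j)=1$. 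The truncation error is $O(\mu^{2N})$ since Step~1 implies $a_n\le C''\mu^{2(n-1)}$. The uniform bound $\sup_N\|S_N\|<\infty$ is the main technical step: individual norms $\|P_n\|_{X\to X}$ are uniform in $n$ because the growth of $\|v^{(n)}\|_X$ is exactly compensated by the decay of $\|\tilde w^{(n)}\|_{X^*}$ (both controlled by $\Phi$), while a Schur-type estimate using the exponential off-diagonal decay of $\Phi$ yields the uniform partial-sum bound. Once $S_N\to I$ strongly, the decomposition $\exp(T_{k(-a)}t)x=\sum_n e^{-a_n t}P_n x$ gives stability via $\|\exp(T_{k(-a)}t)\|\le\sup_N\|S_N\|$ and, by term-wise decay with dominated convergence, $\exp(T_{k(-a)}t)x\to 0$ for every $x\in X$.

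\emph{Step 4: Stability but not asymptotic on $X=\ell^\infty$ or $c$.} Since $b\in c_0$ (Step~1), the operator $T_{k(-a)}$ maps $\ell^\infty$ into $c_0$: both $(Ax)_n=a_nx_n\to 0$ (as $x\in\ell^\infty$, $a_n\to 0$) and $b\cdot(k(-a)x)\in c_0$. Consequently $c_0$ is invariant and Step~3 applies there, while the bidual identification $\ell^\infty\cong c_0^{**}$ with $T_{k(-a)}|_{\ell^\infty}=(T_{k(-a)}|_{c_0})^{**}$ transfers the $c_0$-operator norm of the semigroup to $\ell^\infty$, proving stability on $\ell^\infty$ and (by restriction, since $c$ is $T_{k(-a)}$-invariant) on $c$. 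For the failure of asymptotic stability, $T_{k(-a)}(\ell^\infty)\subset c_0$ forces the induced operator on $\ell^\infty/c_0$ (and on $c/c_0\cong\C$) to be identically zero, so $\exp(T_{k(-a)}t)x\equiv x\pmod{c_0}$ for every $x\in\ell^\infty$. Applied to $\bfo\in c$, whose $\ell^\infty$-distance from $c_0$ equals $1$, this keeps $\exp(T_{k(-a)}t)\bfo$ at $\ell^\infty$-distance $1$ from $0$ for all $t\ge 0$.

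The technical core, and the step I expect to be the main obstacle, is the uniform partial-sum bound in Step~3: extracting a Schur-type estimate from the exponential decay of $\Phi$ must cope with the alternating signs of $\pi_n$ and the delicate cancellation in the Cauchy denominators $a_m+a_n$. The remaining ingredients---the Sherman--Morrison/residue calculus of Step~2 and the bidual argument of Step~4---are relatively standard.
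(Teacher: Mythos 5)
Your plan is structurally the right one and runs parallel to the paper's: the eigenvectors $v^{(n)}=(b_m/(a_m+a_n))_m$, the identity $\sum_m a_m\pi_m/(a_n+a_m)=1$, and the biorthogonality relations you write down are exactly the objects the paper uses (it packages them as an involution $P$ with entries $a_j\pi_j/(a_i+a_j)$ acting on the weighted space $Y=\{y:(b_ny_n)\in X\}$, so that $P^2=I$ and $P\tilde TP^{-1}=-A$). Your Step 1 reaches $k(-a)\in\ell^1$ by a slightly different but valid route (single-row exponential bounds on $\Phi$ rather than the paper's row-sum bound), and your quotient argument for the failure of asymptotic stability on $\ell^\infty$ and $c$ --- $T_{k(-a)}(\ell^\infty)\subset c_0$, hence $\exp(T_{k(-a)}t)\bfo\equiv\bfo\pmod{c_0}$ --- is clean and arguably slicker than the paper's direct computation.

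However, the step you yourself flag as the expected obstacle is a genuine gap, and it is precisely where the entire proof lives. The uniform bound $\sup_N\|S_N\|<\infty$ (equivalently, boundedness of the eigenvector matrix together with its inverse) is not a routine Schur estimate that can be waved at: in the unweighted coordinates the entries of $P_n$ involve the ratios $b_j/b_m$, which are unbounded, and the compensation between the growth of $\|v^{(n)}\|_X$ and the decay of $\|\tilde w^{(n)}\|_{X^*}$ that you invoke is exactly what must be proved. The paper's resolution is to pass to the weighted norm $\|y\|_Y=\|(b_ny_n)\|_X$, in which the change-of-basis operator becomes the Cauchy-type matrix $[a_j\pi_j/(a_i+a_j)]$ whose entries are controlled \emph{entrywise} by $\|\pi\|_{\ell^\infty}\phi_{ij}$; the spatial exponential decay of $\Phi$ then gives uniformly bounded row and column $\ell^1$-sums, and a H\"older/Schur test yields $\|P\|_{\mathcal{B}(Y)}\le\kappa\|\pi\|_{\ell^\infty}$, while $P^2=I$ follows from a finite-section limit of the classical Cauchy-matrix inversion formula. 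Without carrying out this weighting (or an equivalent device), your Step 3 does not close; and two subsidiary claims --- that $\|P_n\|_{X\to X}$ is uniform in $n$, and that the second identity $\sum_m a_m\pi_m/(a_n+a_m)^2=1/(a_n\pi_n)$ follows from differentiating $G$ at a simple zero $-a_n$ --- are asserted rather than proved. Note also that in your final limit $\sum_n e^{-a_nt}P_nx\to 0$ you cannot use termwise decay alone, since $a_n\to 0$ means $e^{-a_nt}\not\to 0$ uniformly in $n$; you need the same truncate-then-estimate argument the paper uses, which again requires the uniform partial-sum bound.
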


We conclude this section by presenting a sufficient condition for the hypothesis of Theorem~\ref{thm:sufficientnew} to be satisfied. 

\begin{Theorem}\label{thm:sufficient}
Suppose that there exist constants $0 < \nu_0 < \nu_1 < \nu_2 < 1$ such that 
\begin{equation*}
 a_{n+1}/a_n < \nu_0 \quad \mbox{and} \quad  \nu_1 < |b_{n+1}/b_n| < \nu_2, \quad \mbox{for all } n\in \N; 
\end{equation*}
then, $\pi\in \ell^\infty$ and $\Phi$ spatially exponentially decays.  
\end{Theorem}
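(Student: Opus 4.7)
The plan is to establish the two conclusions separately, exploiting the geometric decay of $a_n$ and the two-sided geometric control on $|b_n|$.

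\emph{Bounding $\pi_n$.} Since $\pi_n$ may have sign issues, I will work with $|\pi_n|$, where each factor can be written as
\[
\left|\frac{1+a_m/a_n}{1-a_m/a_n}\right|=1+\frac{2\min(a_m/a_n,a_n/a_m)}{1-\min(a_m/a_n,a_n/a_m)}.
\]
The hypothesis $a_{n+1}/a_n<\nu_0<1$ iterates to give $a_m/a_n\le\nu_0^{m-n}$ for $m>n$ and $a_n/a_m\le\nu_0^{n-m}$ for $m<n$. So in both cases each factor is at most $1+\tfrac{2\nu_0^{|m-n|}}{1-\nu_0}$. Taking logs and using $\log(1+x)\le x$, I get
\[
\log(|\pi_n|/2)\le \sum_{m\neq n}\frac{2\nu_0^{|m-n|}}{1-\nu_0}\le \frac{4}{1-\nu_0}\sum_{k=1}^\infty \nu_0^{k}=\frac{4\nu_0}{(1-\nu_0)^2},
\]
a bound independent of $n$. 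Hence $\pi\in\ell^\infty$.

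\emph{Spatial exponential decay of $\Phi$.} Iterating $\nu_1<|b_{n+1}/b_n|<\nu_2$ gives $\nu_1^{i-j}<|b_i/b_j|<\nu_2^{i-j}$ whenever $i>j$, and reciprocally $\nu_2^{-(j-i)}<|b_i/b_j|<\nu_1^{-(j-i)}$ whenever $i<j$. I split into three cases. For $i>j$, bound $1+a_i/a_j\ge 1$ and use $|b_i/b_j|\le \nu_2^{i-j}$, giving $\phi_{ij}\le\nu_2^{i-j}$. For $i<j$, bound $1+a_i/a_j\ge a_i/a_j\ge \nu_0^{-(j-i)}$ and use $|b_i/b_j|\le \nu_1^{-(j-i)}$, giving $\phi_{ij}\le (\nu_0/\nu_1)^{j-i}$. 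The key inequality $\nu_0<\nu_1$ ensures $\nu_0/\nu_1<1$. For $i=j$, $\phi_{ii}=1/2$. Setting $\mu:=\max(\nu_2,\nu_0/\nu_1)\in(0,1)$ and $C:=1$, all three cases yield $\phi_{ij}\le C\mu^{|i-j|}$.

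The main obstacle is the $\pi_n$ estimate, because the product ranges over all $m\neq n$ on both sides of $n$, and the factors with $m<n$ a priori look singular (since $a_m/a_n\to\infty$). The clean resolution is the algebraic identity above that folds the two tails into a single $\min(a_m/a_n,a_n/a_m)$, which by the geometric hypothesis is always bounded by $\nu_0^{|m-n|}$; after that, summability of the log is immediate. The $\Phi$ estimate is, by comparison, a bookkeeping exercise that crucially uses the strict ordering $\nu_0<\nu_1$ to beat the blow-up of $|b_i/b_j|$ for $i<j$ against the blow-up of $a_i/a_j$.
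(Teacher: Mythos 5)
Your proposal is correct and follows essentially the same route as the paper's proof: for $\pi$ you bound each factor by $1+\tfrac{2\nu_0^{|m-n|}}{1-\nu_0}$ and sum the logarithms (your $\min(a_m/a_n,a_n/a_m)$ identity merely folds the paper's two cases $m<n$ and $m>n$ into one line, arriving at the same constant $\tfrac{4\nu_0}{(1-\nu_0)^2}$), and for $\Phi$ you use the identical case split with $\mu=\max\{\nu_0/\nu_1,\nu_2\}$. No gaps.
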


\section{Proofs of the Main Results}\label{sec:proofs}
In this section, we establish the results presented in Section~\S\ref{sec:mainresult}. There are five subsections, each of which is dedicated to the proof of an individual theorem. 

\subsection{Proof of Theorem~\ref{thm:theoremnecessary}}\label{ssec:theoremnecessary}
The proof relies on the use of the Weinstein–Aronszajn (W-A) formula, which we recall below. 
Let 
$\Delta:= \rP(A) \cup \Sigma_{\rm disc}(A)$.   
Since $T_k$ is a rank-one perturbation of $A$, we have $\Sigma_{\rm ess}(T_k) = \Sigma_{\rm ess}(A)$. 
Thus, $\Delta = \rP(T_k) \cup \Sigma_{\rm disc}(T_k)$. 
For a given $k = (k_n)\in X^*$, we define $\gamma_k:\Delta \to \N_0$ as follows:
\begin{equation*}\label{eq:defmuk}
\gamma_k(z) :=
\begin{cases}
0 & \mbox{if } z\in \rP(T_k), \\
\rk(P_z) & \mbox{if } z\in \Sigma_{\rm disc}(T_k), 
\end{cases}
\end{equation*}
where $\rk(P_z)$ is the rank of the Riesz operator for~$z$.   
We also introduce the following meromorphic function: 
\begin{equation}\label{eq:defh}
h_k(z) := 1 + k (A - z)^{-1} b.
\end{equation} 
The multiplicity function $\nu_h: \Delta \to \Z$ associated with $h_k$ is defined as follows:
\begin{equation*}\label{eq:defnu}
\delta_k(z) := 
\begin{cases}
m & \mbox{if } z \mbox{ is a zero of } h_k \mbox{ with order } m, \\
-m & \mbox{if } z \mbox{ is a pole of } h_k \mbox{ with order } m,  \\
0 & \mbox{otherwise}.
\end{cases}
\end{equation*}
The following result is known~\cite[IV-\S 6]{kato2013perturbation}:

\begin{Lemma}[W-A formula]\label{lem:WAformula}
For any $k \in X^*$, we have 
$$\gamma_{k}(z) = \gamma_0(z) + \delta_k(z), \quad \mbox{for } z\in \Delta.$$
\end{Lemma}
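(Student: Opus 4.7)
The plan is to derive the W-A formula from a Sherman--Morrison-type identity for the resolvent, which is the essential content of Kato's treatment when the perturbation has rank one. Since $T_k - A = bk$ is rank one, a direct computation (multiply out and use the scalar identity $kR(z,A)b = h_k(z) - 1$) verifies that, for every $z \in \rP(A)$ with $h_k(z) \neq 0$,
\begin{equation*}
R(z, T_k) = R(z, A) - \frac{R(z, A)\, b\, k\, R(z, A)}{h_k(z)}.
\end{equation*}
This already settles the claim on the resolvent set: for $z \in \rP(A)$, one has $z \in \rP(T_k)$ iff $h_k(z) \neq 0$, so the zeros of $h_k$ in $\rP(A)$ are exactly the points of $\Sigma_{\rm disc}(T_k) \cap \rP(A)$.

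Next, I would obtain the multiplicity statement by contour-integrating the identity around an isolated point $z_0 \in \Delta$. For $z_0 \in \rP(A)$ with $h_k$ having a zero of order $m$ at $z_0$, we have $\gamma_0(z_0) = 0$, and a residue computation using the rank-one structure of $bk$ shows that the Riesz projector of $T_k$ at $z_0$ has rank $m$, which gives $\gamma_k(z_0) = m = \delta_k(z_0)$. For $z_0 \in \Sigma_{\rm disc}(A)$, both $R(z, A)$ and $h_k(z)$ have nontrivial Laurent expansions around $z_0$. Expanding $R(z, A) = \sum_{j\geq -\ell}(z-z_0)^j R_j$, substituting into the definition of $h_k$ and into the Sherman--Morrison identity, and tracking the pole/zero orders of the resulting contour integrals on both sides yields $\gamma_k(z_0) = \gamma_0(z_0) + \delta_k(z_0)$.

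The main obstacle is precisely the bookkeeping at the points of $\Sigma_{\rm disc}(A)$, where one has to verify that the pole order of $h_k$ (or its zero order, depending on how $k$ pairs with the spectral projector of $A$ at $z_0$ applied to $b$) cancels exactly with the modification of the Riesz projector of $T_k$ relative to that of $A$. The cleanest route is to invoke Kato's presentation in terms of the perturbation determinant, which, for a rank-one perturbation, reduces to the scalar meromorphic function $h_k$; the proof then amounts to checking that the hypotheses of Kato's theorem (boundedness of $A$, finite-rank perturbation, and meromorphy of $h_k$ on $\Delta$) are satisfied in the present setting, which is immediate from $(a_n) \in \ell^\infty$, $b \in X$, and $k \in X^*$.
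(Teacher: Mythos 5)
The paper does not prove this lemma at all: it is stated as a known result with a bare citation to Kato~\cite[IV-\S 6]{kato2013perturbation}, so there is no in-paper argument to compare against. Your proposal goes further than the paper by outlining the standard route: the Sherman--Morrison resolvent identity $R(z,T_k)=R(z,A)-h_k(z)^{-1}R(z,A)\,b\,k\,R(z,A)$ on $\{z\in\rP(A): h_k(z)\neq 0\}$, the identification of $\Sigma(T_k)\cap\rP(A)$ with the zero set of $h_k$, and contour integration to match Riesz-projector ranks with zero/pole orders of $h_k$. That outline is correct, and the parts you work out in detail (the identity itself, and the case $z_0\in\rP(A)$ where $\gamma_0(z_0)=0$) are fine. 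The one place where your write-up is a plan rather than a proof is exactly the part you flag yourself: at $z_0\in\Sigma_{\rm disc}(A)$ one must track how the Laurent expansion of $R(z,A)$ interacts with the pole of $h_k$ and verify that the rank of the Riesz projector of $T_k$ changes by precisely $\delta_k(z_0)$; this is the genuine content of the Weinstein--Aronszajn theory and is not carried out. Your fallback---invoking Kato's perturbation-determinant formulation and checking its hypotheses (bounded $A$, rank-one perturbation $bk$ with $b\in X$, $k\in X^*$, meromorphy of $h_k$ on $\Delta$)---is legitimate and is, in substance, exactly what the paper does. So the proposal is acceptable as it stands; if you wanted a self-contained proof you would need to complete the residue bookkeeping at the points of $\Sigma_{\rm disc}(A)$, but for the purposes of this paper the citation suffices.
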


With the lemma above, we prove Theorem~\ref{thm:theoremnecessary}. 

\begin{proof}[Proof of Theorem~\ref{thm:theoremnecessary}] 
Let $k\in X^*$ be such that $\Sigma(T_k) \subseteq H$. We show below that the two items of the theorem must be satisfied. 

\xc{Proof of item~1.} 
Let $\sigma$ be an accumulation point of $(a_n)$. Then, $\sigma\in \Sigma_{\rm ess}(A)$. Since $\Sigma_{\rm ess}(T_k) = \Sigma_{\rm ess}(A)$, we have that $\sigma \in \Sigma_{\rm ess}(T_k)$. By the hypothesis that $\Sigma(T_k)\subseteq H$, it follows that $\sigma$ can only be $0$, which implies that $(a_n)\in c_0$. 
Consequently,  
$\Sigma_{\rm disc}(A) = \{a_n \mid n \in \N\}$  and hence,  $\Delta = \C \backslash \{0\}$.  
We now show that the $a_n$'s are pairwise distinct. 
Consider the meromorphic function $h_k(z)$ given in~\eqref{eq:defh}. The poles of $h_k(z)$ are contained in the set $\{a_n \mid n\in \N\}$. Moreover, if $a_n$, for some $n\in \N$, is a pole of $h_k(z)$, its order is $1$. 
Then, by Lemma~\ref{lem:WAformula} and by the hypothesis, we have that
\begin{equation}\label{eq:inequalityforWAF}
0 = \gamma_k(a_n) = \gamma_0(a_n) + \delta_k(a_n) \geq 0, \quad \mbox{for all }n \in \N,
\end{equation}
where the inequality holds because $\gamma_0(a_n) \geq 1$ and $\delta_k(a_n)\leq 1$. Thus, for~\eqref{eq:inequalityforWAF} to hold, we must have that $\gamma_0(a_n) = \delta_k(a_n) = 1$, which is true if and only if the algebraic multiplicity of $a_n$ is $1$ for all $n\in N$ (i.e., the $a_n$'s are pairwise distinct) and each $a_n$ is a pole of $h_k(z)$.  

\xc{Proof of item~2.} On one hand, we have that $\delta_k(a_n) = 1$ for all $n\in \N$. On the other hand, given that all the $a_n$'s are pairwise distinct, $a_n$ is a pole of $h_k(z)$ only if $b_n \neq 0$.

This completes the proof of Theorem~\ref{thm:theoremnecessary}. 
\end{proof}

In the sequel, we assume that the two items of Theorem~\ref{thm:theoremnecessary} are satisfied. Also, 
 without loss of generality, we assume that  $(a_n)$ is strictly monotonically decreasing.

\subsection{Proof of Theorem~\ref{thm:main1}}\label{ssec:proofofmain1} 
To establish the theorem, we will first  construct a family of eventually zero sequences $k(\lambda; N)$, for $N\in \N$, converging to $k(\lambda)$ as $N\to \infty$, and then use perturbation theory to show that 
$$\Sigma(T_{k(\lambda)}) = \lim_{N\to\infty }\Sigma(T_{k(\lambda;N)}) = \{\lambda_n\mid n\in \N_0\}.$$ 

\xc{Ackermann's formula:} Consider an $N$-dimensional linear control system with single-input: 
\begin{equation}\label{eq:findimsys}
\dot x'(t) = A' x'(t) + b' u(t),  
\end{equation}
where $A'$ and $b'$ are given by
\begin{equation}\label{eq:defA'andb'forfindimsys}
A':= \Diag(a_1,\cdots, a_N) \quad \mbox{and} \quad b':= (b_1,\cdots, b_N).
\end{equation}
Since the $a_n$'s are pairwise distinct and since the $b_n$'s are nonzero, system~\eqref{eq:findimsys} is controllable and hence, pole placement is feasible for~\eqref{eq:findimsys}. 
Given $N$ complex numbers $\lambda_1,\ldots, \lambda_N\in \C$, the Ackermann's formula~\cite{ackermann1972entwurf} provides an explicit expression for a (unique) row vector $k'\in \C^N$ such that $\Sigma(A' + b'k') = \{\lambda_1,\ldots, \lambda_N\}$. The formula is given by 
\begin{equation}\label{eq:ackermann}
k' = -e_N C(A', b')^{-1} q(A'), 
\end{equation}
where $e_N = (0,\ldots,0, 1)\in \R^N$ is a row vector, $C(A', b') := [b', A'b', \cdots, A'^{N-1}b']$ is the controllability matrix associated with $(A',b')$, and $q(z):= \prod_{n = 1}^N(z - \lambda_n)$ is the monic polynomial with $\lambda_n$'s the roots. 
The following result must be known, but we include a proof for completeness of presentation:

\begin{Lemma}\label{lem:explicitexpressionofKN}
The vector $k' = (k'_1,\ldots,k'_N)$ in~\eqref{eq:ackermann} is given by
\begin{equation}\label{eq:defkN}
k'_n = -\frac{(a_n - \lambda_n)}{b_n} \prod_{m = 1, m\neq n}^N \frac{1 - \lambda_m/a_n}{1 - a_m/a_n}, \quad \mbox{for } n = 1,\ldots, N. 
\end{equation}
\end{Lemma}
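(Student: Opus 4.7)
The plan is to bypass the Vandermonde-style inversion of the controllability matrix $C(A',b')$ and instead compare two characteristic polynomials directly. Concretely, I would apply the matrix determinant lemma (Sylvester's identity) to the rank-one perturbation $A' + b'k'$ to write
\begin{equation*}
\det(zI - A' - b'k') \;=\; \det(zI - A')\,\bigl(1 - k'(zI - A')^{-1} b'\bigr).
\end{equation*}
Because $A'$ is diagonal, $(zI - A')^{-1} = \Diag\bigl(1/(z-a_n)\bigr)$, so $k'(zI-A')^{-1}b' = \sum_{n=1}^N k'_n b_n/(z-a_n)$. The desired condition $\Sigma(A' + b'k') = \{\lambda_1,\dots,\lambda_N\}$ then becomes the polynomial identity
\begin{equation*}
\prod_{n=1}^N (z - a_n) \;-\; \sum_{n=1}^N k'_n b_n \prod_{m \neq n}(z - a_m) \;=\; \prod_{n=1}^N (z - \lambda_n),
\end{equation*}
in the unknowns $k'_1,\dots,k'_N$.

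Next I would evaluate this identity at $z = a_j$, for each $j = 1,\dots,N$. On the left, every term in the sum except $n=j$ vanishes (because of the factor $z - a_j$), and the first term vanishes entirely; on the right, we obtain $\prod_{n=1}^N(a_j - \lambda_n)$. Rearranging yields
\begin{equation*}
k'_j \;=\; -\,\frac{1}{b_j}\,\frac{\prod_{n=1}^N (a_j - \lambda_n)}{\prod_{m\neq j}(a_j - a_m)},
\end{equation*}
and pulling out the $n=j$ factor from the numerator and dividing numerator and denominator of each remaining factor by $a_j$ recovers exactly the product formula~\eqref{eq:defkN}.

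Finally, to close the argument I would verify that this choice of $k'$ actually solves the polynomial identity (not merely its $N$ evaluations): the difference of the two sides is a polynomial in $z$ of degree at most $N-1$ that vanishes at the $N$ pairwise distinct points $a_1,\dots,a_N$, hence is identically zero. Uniqueness of $k'$ then follows from controllability (equivalently, from the invertibility of $C(A',b')$ in Ackermann's formula). I do not expect a serious obstacle: the only subtlety worth flagging is the use of Assumption~\ref{asmp:positivean} together with item~1 of Theorem~\ref{thm:theoremnecessary}, which guarantee that the $a_n$'s are pairwise distinct so that the Lagrange-interpolation step and the division by $\prod_{m\neq n}(a_n - a_m)$ are legitimate.
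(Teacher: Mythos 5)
Your proof is correct, but it takes a genuinely different route from the paper. The paper works directly with Ackermann's formula~\eqref{eq:ackermann}: it factors the controllability matrix as $C(A',b') = B'V$ with $V$ the Vandermonde matrix, invokes the explicit inverse $V^{-1} = LV^\top D^{-1}$ (citing the literature), and observes that $e_N LV^\top = \bfo$, which collapses the Ackermann expression to $-\bfo D^{-1}B'^{-1}q(A')$ and hence to~\eqref{eq:defkN}. You instead bypass the matrix inversion entirely: the determinant lemma reduces pole placement to the polynomial identity $\prod_n(z-a_n) - \sum_n k'_n b_n \prod_{m\neq n}(z-a_m) = \prod_n(z-\lambda_n)$, which you solve by Lagrange interpolation at the nodes $a_1,\dots,a_N$ and then verify globally by the degree-$(N-1)$ vanishing argument. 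Your computation is right (dividing numerator and denominator by $a_n>0$ does recover~\eqref{eq:defkN}), and the approach is more elementary and self-contained in that it needs no Vandermonde-inverse formula. The one thing to be explicit about is that the lemma, as stated, identifies the \emph{Ackermann} vector with~\eqref{eq:defkN}, whereas you characterize the unique gain achieving $\Sigma(A'+b'k')=\{\lambda_1,\dots,\lambda_N\}$; you do close this gap correctly by noting that controllability (the polynomials $\prod_{m\neq n}(z-a_m)$ form a basis of degree-$\le N-1$ polynomials and $b_n\neq 0$) forces uniqueness, so the two vectors coincide. Either proof serves the paper's purpose equally well, since only the closed-form~\eqref{eq:defkN} and the spectrum statement are used downstream.
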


\begin{proof}
Let $B':= \Diag(b')$ and $V$ be the Vandermonde matrix 
    $$V := [ a_i^{j - 1} ]_{1 \leq i, j\leq N}.$$ 
Then, $C(A',b')$ can be expressed as $C(A',b') = B' V$. 
We compute below $V^{-1}$. Let $p(z)= z^N + \sum_{n = 0}^{N-1} c_{n} z^n$ be the characteristic polynomial of $A'$. Using the coefficients $c_1,\ldots, c_{N-1}$, we define the matrix:  
\begin{equation*}\label{eq:toeplitzmatrix}
L := 
\begin{bmatrix}
c_1 & \cdots & c_{N-1} & 1\\
\vdots & \iddots & \iddots &  \\
c_{N-1} & 1 &  &  \\
1 &  &  & 
\end{bmatrix}, 
\end{equation*}
where the entries below the anti-diagonal are zero. 
Next, let $d_n := \prod_{m = 1, m\neq n}^N (a_n - a_m)$, for $n = 1,\ldots, N$, and $D := \Diag(d_1,\ldots,d_N)$.   
Then, it is known~\cite{chen1981new,man2014inversion} that 
$V^{-1} = L V^\top D^{-1}$.  
By the Ackermann's formula~\eqref{eq:ackermann}, we have that 
$$
k' = - e_N C(A',b')^{-1} q(A') = - e_N LV^\top D^{-1} B'^{-1} q(A') = -\bfo D^{-1}B'^{-1} q(A'), 
$$
where $\bfo\in \R^N$ is a row vector of all ones and the fact that $e_N LV^\top = \bfo$ follows directly from computation. 
Expression~\eqref{eq:defkN} is then an immediate consequence of the above equation. 
\end{proof}

Note that the expression of $k(\lambda)$ given in~\eqref{eq:defklambda1} is a straightforward extension of~\eqref{eq:defkN} by letting $N\to\infty$. Thus, Theorem~\ref{thm:main1} can be viewed as an extension of the Ackermann's formula for the case where $A$ is an (infinite dimensional) diagonal operator.

\xc{Convergent sequence of feedback gains.} 
Recall that $c_{00}$ is the space of eventually zero sequences. 
For every $N\in \N$, we define an element $k(\lambda; N)\in c_{00}$ as follows: 
\begin{equation}\label{def:approximationofklambda}
k_n(\lambda;N) := 
\begin{cases}
k'_n & \mbox{if } 1\leq n \leq N,  \\
0 & \mbox{otherwise},  
\end{cases}
\end{equation}
where $k'_n$ is given as in~\eqref{eq:defkN}. 
Note that $k(\lambda;N)$ is {\it not} obtained by truncating $k(\lambda)$, i.e., $k_n(\lambda, N) \neq k_n(\lambda)$ for $n = 1,\ldots, N$. We have the following result: 

\begin{Proposition}\label{prop:convergenceofkN}
Suppose that  $k(\lambda)$, for $\lambda\in c_H$, belongs to $\ell^q$ for some $1 \leq q < \infty$; then, 
$$\lim_{N\to\infty}\|k(\lambda;N) - k(\lambda)\|_{\ell^q} = 0.$$  
\end{Proposition}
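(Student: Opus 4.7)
The plan is to factor each truncation $k_n(\lambda;N)$ as $k_n(\lambda)$ times a ``tail ratio,'' establish a uniform bound on that ratio using both $\lambda\in c_H$ and the monotonicity of $(a_n)$, and then conclude by dominated convergence for series. First I would decompose
\[
\|k(\lambda;N) - k(\lambda)\|_{\ell^q}^q \;=\; \sum_{n=1}^{N}|k_n(\lambda;N) - k_n(\lambda)|^q \;+\; \sum_{n=N+1}^{\infty}|k_n(\lambda)|^q.
\]
The second sum is the tail of the convergent series $\sum_n|k_n(\lambda)|^q$, so it tends to $0$ as $N\to\infty$ by the hypothesis $k(\lambda)\in\ell^q$. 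The real work is in the first sum.

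For $n\leq N$, observe directly from~\eqref{eq:defklambda1} and~\eqref{eq:defkN} that
\[
k_n(\lambda;N) \;=\; k_n(\lambda)\cdot c_n(N), \qquad c_n(N) \;:=\; \prod_{m=N+1}^{\infty}\frac{1 - a_m/a_n}{1 - \lambda_m/a_n},
\]
so $c_n(N)$ is the reciprocal of the tail of the infinite product defining $k_n(\lambda)$. For each fixed $n$, this tail tends to $1$ (the full product converges to the nonzero value $-k_n(\lambda)b_n/(a_n-\lambda_n)$; nonzero because $\lambda_m\in H$ and $a_n>0$ preclude any factor from vanishing), hence $c_n(N)\to 1$ and $|k_n(\lambda;N) - k_n(\lambda)|^q\to 0$ pointwise in $n$. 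To promote this pointwise convergence to $\ell^q$-convergence I need a summable dominant.

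The key step, which I expect to be the main obstacle, is the uniform bound $|c_n(N)|\leq 1$ for all $n\leq N$ and all $N\in\N$. Here I would leverage both structural assumptions simultaneously: strict monotonicity of $(a_n)$ gives $a_m<a_n$ for $m>N\geq n$, so $1-a_m/a_n\in(0,1)$; and $\re(\lambda_m)\leq 0$ together with $a_n>0$ gives
\[
|1-\lambda_m/a_n| \;\geq\; \re(1-\lambda_m/a_n) \;=\; 1 - \re(\lambda_m)/a_n \;\geq\; 1.
\]
Combining, every factor in $c_n(N)$ has modulus at most $1-a_m/a_n<1$, so $|c_n(N)|\leq\prod_{m>N}(1-a_m/a_n)\leq 1$, uniformly in $n$ and $N$. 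This yields $|k_n(\lambda;N) - k_n(\lambda)|^q\leq 2^q|k_n(\lambda)|^q$, a summable dominant by hypothesis, and dominated convergence for sums closes the argument. Without exploiting the half-plane constraint to anchor the denominators from below, a cruder bound on the product could fail as $n$ approaches $N$, so the joint use of $\lambda\in c_H$ and of the monotonicity of $(a_n)$ is essential.
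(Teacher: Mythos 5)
Your proposal is correct and follows essentially the same route as the paper: the same split of the $\ell^q$-norm into the truncated range and the tail, the same tail-ratio $c_n(N)$ (the paper's $r_n(N)$), and the same two observations — monotonicity of $(a_n)$ giving $0<1-a_m/a_n<1$ and $\re(\lambda_m)\leq 0$ giving $|1-\lambda_m/a_n|\geq 1$ — to get the uniform bound $|c_n(N)|\leq 1$. The only cosmetic difference is that you invoke dominated convergence for series where the paper carries out the $\epsilon/2$ splitting by hand.
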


\begin{proof}
For ease of notation, we use $k$ and $k(N)$ to denote $k(\lambda)$ and $k(\lambda; N)$, respectively. 
For any $\epsilon > 0$, we exhibit an $N_\epsilon$ such that 
\begin{equation}\label{eq:defNstar}
\|k(N) - k\|_{\ell^q} < \epsilon, \quad \mbox{for all } N \geq N_\epsilon.
\end{equation}
Since $k(\lambda)\in \ell^q$, there is an $N'\in \N$ such that 
\begin{equation}\label{eq:defN'}
\sum_{n = N'+1}^\infty |k_n|^q < \epsilon^q/2^{q+1}.
\end{equation} 
Next, for $n\leq N$, we define
$$r_n(N) :=  \prod_{m = N + 1}^\infty \frac{1 - a_m/a_n}{ 1 - \lambda_m/a_n}.$$
By~\eqref{eq:defkN} and~\eqref{eq:defklambda1}, we have $k_n(N) = r_n(N) k_n$. 
Since $(a_n)$ is strictly monotonically decreasing and since all the $a_n$'s are positive, we have $0 < 1 - a_m/a_n < 1$ for $m > n$.   
Also, since $\re(\lambda_n)\leq 0$ for $n\in \N$, we have $|1 - \lambda_m/a_n| \geq 1$ for  $m, n\in \N$. 
It follows that 
\begin{equation}\label{eq:rnlessthan1}
|r_n(N)| < 1, \quad \mbox{for } n \leq N.  
\end{equation} 
Further, by the hypothesis that $\lim_{N\to\infty} k_n(N) = k_n$, we have 
$\lim_{N\to\infty} r_n(N) = 1$ for all $n\in \N$. 
Thus, given the integer $N'$ that satisfies~\eqref{eq:defN'}, there exists an $N'' \geq N'$ such that   
\begin{equation}\label{eq:boundrN}
| 1- r_{n}(N)|^q \leq \frac{\epsilon^q}{2\|k\|_{\ell^q}^q}, \quad \mbox{for all } n \leq N' \mbox{ and for all } N \geq N''.
\end{equation}
We claim that $N_\epsilon:= N''$ satisfies~\eqref{eq:defNstar}. To wit, for any $N \geq N_\epsilon$, we have
\begin{align*}
\|k - k(N)\|_{\ell^q}^q  & = \sum_{n = 1}^{N} |k_n - k_n(N)|^q + \sum_{n = N + 1}^{\infty} |k_n|^q \\
& = \sum_{n = 1}^{N'} |k_n |^q |1 - r_n(N)|^q + \sum_{n = N'+1}^{N} |k_n |^q |1 - r_n(N)|^q + \sum_{n = N + 1}^{\infty} |k_n|^q \\
& \leq \frac{\epsilon^q}{2 \|k\|_{\ell^q}^q} \sum_{n = 1}^{N'} |k_n|^q +  2^q \sum_{n = N'+1}^\infty |k_n|^q  \\
& \leq \frac{\epsilon^q}{2 \|k\|_{\ell^q}^q} \sum_{n = 1}^{\infty} |k_n|^q + 2^q \frac{\epsilon^q}{2^{q+1}} \leq \epsilon^q/2 + \epsilon^q /2 = \epsilon^q,
\end{align*}
where we have used~\eqref{eq:rnlessthan1} and~\eqref{eq:boundrN} to establish the first inequality and used~\eqref{eq:defN'} to establish the second inequality. This completes the proof. 
\end{proof}

With Proposition~\ref{prop:convergenceofkN}, we prove Theorem~\ref{thm:main1}. 

\begin{proof}[Proof of Theorem~\ref{thm:main1}] 
For convenience, we still use $k$ and $k(N)$ to denote $k(\lambda)$ and $k(\lambda; N)$, respectively. 
By its construction~\eqref{eq:defklambda1} and by the hypothesis of the theorem, we have $k \in \ell^q$ for some $1\leq q\leq \infty$.     More specifically, we have the following two cases: 
(1) If $X = c$, or, $X = c_0$, or, $X = \ell^p$, for $1 < p \leq \infty$,  then $k \in \ell^q$ for some $1\leq q < \infty$; (2) If $X = \ell^1$, then $k \in \ell^\infty$. We establish the theorem for these two cases:

\xc{Case 1: $k \in \ell^q$ for $1\leq q < \infty$.} 
First, note that $$\|T_{k(N)} - T_{k}\|_{\mathcal{B}(X)} \leq \|b\|_{X} \|k(N) - k\|_{\ell^q}.$$ 
It then follows from Proposition~\ref{prop:convergenceofkN} that 
$\lim_{N\to \infty}\|T_{k(N)} - T_k\|_{\mathcal{B}(X)} = 0$.
For each $N$, let $A'(N)\in \R^{N\times N}$ and $b'(N)\in \R^N$ be given as in~\eqref{eq:defA'andb'forfindimsys}, $k'(N)\in \R^N$ be given as in~\eqref{eq:defkN}, and $$T'(N):= A'(N) + b'(N)k'(N).$$ 
By the definition of $k(N)$ as in~\eqref{def:approximationofklambda}, each operator $T_{k(N)}$ is in the block lower triangular form, with  $T'(N)$ and $A''(N):= \Diag(a_{n+1},a_{n+2}, \cdots)$ the diagonal blocks. By Lemma~\ref{lem:explicitexpressionofKN}, the eigenvalues of $T'(N)$ are $\lambda_1,\ldots,\lambda_N$. It follows that  
\begin{equation}\label{eq:spectrumofapproximation}
\Sigma(T_{k(N)}) = \{\lambda_n \mid 0\leq n \leq N \} \cup \{a_n \mid n > N \}.
\end{equation}

We now use~\eqref{eq:spectrumofapproximation} and perturbation theory to show that $\Sigma(T_k) = \{\lambda_n \mid n\in \N_0\}$.   
By continuity of a finite system of eigenvalues~\cite[IV-\S 3]{kato2013perturbation}, if $\lambda\in \Sigma_{\rm disc}(T_{k})$, then for sufficiently large $N$, $T_{k(N)}$ has an isolated eigenvalue arbitrarily close to $\lambda$. Since $\lim_{n\to\infty} a_n = 0$,  by~\eqref{eq:spectrumofapproximation} it must hold that $\lambda = \lambda_n$ for some $n$. The above arguments imply that $\Sigma(T_k) \subseteq \{\lambda_n \mid n\in \N_0\}$. Conversely, we claim that every $\lambda_n$ belongs to $\Sigma_{\rm disc}(T_k)$.  
Suppose not, say $\lambda_n \in \rP(T_k)$; then, by the fact that 
the resolvent $R(z,T)$, for $z\in \C$ and $T\in \mathcal{B}(X)$, is analytic~\cite[IV-\S 3]{kato2013perturbation} at $(\lambda_n,T_k)$, we must have that $\lambda_n\in \rP(T_{k(N)})$ for sufficiently large $N$, which contradicts~\eqref{eq:spectrumofapproximation}. This completes the proof for Case 1.

\xc{Case 2: $k\in\ell^\infty$.} In this case, $k(N)$ may not converge, in the $\ell^\infty$-sense, to $k$ and hence, some additional efforts are needed to establish the result. To this end, let $\tilde k$ and $\tilde k(N)$, for $N\in \N$, be defined as follows: 
$$\tilde k  := (k_n b_n) \quad \mbox{and} \quad \tilde k(N) := (k_n(N)b_n).$$
Since $k\in \ell^\infty$ and $b\in \ell^1$, we have that $\tilde k \in \ell^1$. By Proposition~\ref{prop:convergenceofkN}, $\tilde k(N)$ converges, in the $\ell^1$-sense, to $\tilde k$. 
Next, define the operators $\tilde T: \ell^\infty \to \ell^\infty$ and $\tilde T(N): \ell^\infty \to \ell^\infty$, for $N\in \N$, as
$$
\tilde T := A + \bfo \tilde k \quad \mbox{and} \quad \tilde T(N) := A + \bfo \tilde k(N).
$$
Then, $\tilde T(N)$ converges to $\tilde T$. Note that~\eqref{eq:spectrumofapproximation} still holds for $\Sigma(\tilde T(N))$. Thus, by the same arguments as used in Case 1, we have $\Sigma(\tilde T) = \{\lambda_n | n\in \N_0\}$. 
It now remains to show that $\Sigma(T_k) = \Sigma(\tilde T)$. 

We first show that $\Sigma(T_k) \subseteq \Sigma(\tilde T)$. 
Let $\lambda\neq 0$ be an eigenvalue of $T_k$. We claim that $\lambda \neq a_n$ for any $n\in \N$. To wit, first note that the meromorphic function $h_k(z)$, for $z\in \C\backslash\{0\}$, introduced in~\eqref{eq:defh} is given by
$$
h_k(z) = 1 + \sum_{n = 1}^\infty \frac{k_n b_n}{a_n - z}.
$$
By the definition~\eqref{eq:defklambda1} of $k$, $k_n\neq 0$ for $n\in \N$. Since $b_n \neq 0$, we have that $k_n b_n \neq 0$ for all $n\in \N$. Thus, the poles of $h_k(z)$ are $a_n$, for $n\in \N$, the orders of which are~$1$. It follows from the W-A formula that $$\gamma_k(a_n) = \gamma_0(a_n) + \delta_k(a_n) = 1 - 1 = 0,$$ 
which establishes the claim. 
Now, let $v$ be an eigenvector of $T_k$ corresponding to $\lambda$. 
Entry-wise we have
\begin{equation}\label{eq:eigenvectorfork}
    a_n v_n + (kv) b_n = \lambda v_n, \quad \mbox{for } n\in \N. 
\end{equation}
Since $\lambda \neq a_n$ for any $n\in \N$, $kv \neq 0$. One can thus re-scale $v$ such that $kv = 1$ and, correspondingly, 
$v := (b_n/(a_n - \lambda)) \in \ell^1$. 
Now, let $\tilde v:= (1/(a_n - \lambda_n))\in \ell^\infty$. It should be clear that $\tilde k \tilde v = k v$. Thus, by~\eqref{eq:eigenvectorfork}, we obtain that
\begin{equation*}\label{eq:eigenvectorfortildek}
a_n \tilde v_n + \tilde k \tilde v = \lambda \tilde v_n, \quad \mbox{for } n\in \N, 
\end{equation*}
which implies that $\lambda$ is an eigenvalue of $\tilde T$ with $\tilde v$ a corresponding eigenvector.  

Conversely, let $\lambda \neq 0$ be an eigenvalue of $\tilde T$. We show that $\lambda \in \Sigma(T_k)$. Since $\Sigma(\tilde T) = \{\lambda_n \mid n\in\N_0 \}$,  $\lambda \neq a_n$ for any $n\in \N$. Thus, one can use the same arguments to conclude that $\tilde v = (1/(a_n - \lambda))$ is an eigenvector of $\tilde T$ corresponding to $\lambda$ and, consequently,  $v = (b_n/(a_n - \lambda))$ is an eigenvector of $T_k$ corresponding to~$\lambda$. This completes the proof for Case~2.         
\end{proof}

\subsection{Proof of Theorem~\ref{thm:necessaryandsufficient}}
Given $(a_n)$ and $(\lambda_n)$, we define two sequences $(\alpha_n)$ and $(\beta_n)$ as follows:   
\begin{equation*}\label{eq:defalphabeta}
\alpha_n := \frac{1}{n} \sum_{m = 1,m\neq n}^\infty \ln \left | 1 - a_m/a_n \right |  
\quad \mbox{and} \quad 
\beta_n := \frac{1}{n}  \sum_{m = 1}^\infty \ln \left | 1 - \lambda_m/a_n \right |.
\end{equation*}
It should be clear that 
\begin{equation*}\label{eq:relaabk}
\ln |k_n(\lambda)| = n(\beta_n - \alpha_n) + \ln|a_n/b_n|.
\end{equation*}

We establish below relevant properties about the asymptotic behaviors of $(\alpha_n)$ and of $(\beta_n)$, which  will be essential for the proof of Theorem~\ref{thm:necessaryandsufficient}. 
We start by introducing the function $\zeta: (1,\infty) \to \R$ defined as follows: 
\begin{equation}\label{eq:deffunctionofphi}
\zeta(d):= \int_0^1 \ln (1/z^d - 1) \rd z + \int_1^\infty \ln (1 - 1/z^d) \rd z. 
\end{equation}
We need the following result:

\begin{Lemma}\label{lem:expressionoftheta}
The function $\zeta(d)$ is strictly monotonically increasing and satisfies the following properties: 
\begin{equation}\label{eq:propertiesoftheta}
        \lim_{d\to 1^+} \zeta(d) = -\infty, \quad \zeta(2) = 0, \quad \mbox{and} \quad \lim_{d\to\infty} \zeta(d)/d = 1,        
    \end{equation}
\end{Lemma}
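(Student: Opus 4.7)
My plan is to replace the splitting in~\eqref{eq:deffunctionofphi} by a single tractable integral and then dispatch three of the four claims by standard estimates, treating only the exact identity $\zeta(2)=0$ as requiring an honest computation. Applying the substitutions $z=e^{-t}$ to the first summand and $z=e^{t}$ to the second, and using $\ln(e^{dt}-1)=dt+\ln(1-e^{-dt})$ together with $\int_0^\infty t e^{-t}\,\rd t=1$, I would obtain the normal form
\begin{equation*}
\zeta(d) \;=\; d + 2\int_0^\infty \ln(1-e^{-dt})\cosh(t)\,\rd t,
\end{equation*}
which is well-defined for every $d>1$ since the integrand is logarithmic near $t=0$ and of order $e^{-(d-1)t}$ at infinity.

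For monotonicity and the two asymptotic limits I would argue as follows. Differentiating under the integral sign, legitimate on any compact $[d_0,d_1]\subset(1,\infty)$ via the integrable majorant $t\,e^{t}/(e^{d_0 t}-1)$, yields
\begin{equation*}
\zeta'(d) \;=\; 1 + 2\int_0^\infty \frac{t\,e^{-dt}}{1-e^{-dt}}\cosh(t)\,\rd t \;>\; 0,
\end{equation*}
so $\zeta$ is strictly increasing. As $d\to 1^+$, the elementary bounds $\ln(1-x)\leq -x$ and $\cosh t\geq e^t/2$ give
\begin{equation*}
\int_1^\infty \ln(1-e^{-dt})\cosh(t)\,\rd t \;\leq\; -\tfrac{1}{2}\int_1^\infty e^{-(d-1)t}\,\rd t \;\longrightarrow\; -\infty,
\end{equation*}
while the portion on $(0,1)$ stays uniformly bounded in $d\geq 1$ because $|\ln(1-e^{-dt})|$ decreases in $d$; hence $\zeta(d)\to-\infty$. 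For $d\to\infty$, rescaling $u=dt$ on $(0,1]$ gives $\bigl|\int_0^1\ln(1-e^{-dt})\cosh(t)\,\rd t\bigr|\leq (\cosh 1/d)\int_0^\infty|\ln(1-e^{-u})|\,\rd u = O(1/d)$, and the tail on $(1,\infty)$ is exponentially small. Thus the integral term is $o(d)$ and $\zeta(d)/d\to 1$.

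The main obstacle is the exact evaluation $\zeta(2)=0$, which is the one place where soft bounds cannot substitute for a calculation. For this I would expand $\ln(1-e^{-2t})=-\sum_{k\geq 1}e^{-2kt}/k$, swap sum and integral (justified by monotone convergence applied to the negated nonnegative series), and use $\int_0^\infty e^{-2kt}\cosh(t)\,\rd t=\tfrac{1}{2}\bigl((2k-1)^{-1}+(2k+1)^{-1}\bigr)$ to compute
\begin{equation*}
2\int_0^\infty \ln(1-e^{-2t})\cosh(t)\,\rd t \;=\; -\sum_{k=1}^\infty \frac{4}{(2k-1)(2k+1)} \;=\; -2\sum_{k=1}^\infty\left(\frac{1}{2k-1}-\frac{1}{2k+1}\right) \;=\; -2
\end{equation*}
by telescoping. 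Substituting into the normal form yields $\zeta(2)=2-2=0$, completing the list of required properties.
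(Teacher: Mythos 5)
Your proof is correct, and it takes a genuinely different route from the paper's. The paper expands $\ln(1-z^d)$ as a power series in $z^d$ and integrates term by term to obtain the single closed form $\zeta(d) = d - 2d\sum_{m=1}^\infty (m^2d^2-1)^{-1}$, from which all three properties in~\eqref{eq:propertiesoftheta} are read off at once (monotonicity is simply asserted as clear from the definition, which it is, since both integrands increase in $d$); the telescoping evaluation at $d=2$ is then the same as yours. You instead pass to the normal form $\zeta(d)=d+2\int_0^\infty\ln(1-e^{-dt})\cosh t\,\rd t$ via the substitutions $z=e^{\mp t}$ and treat each claim by a separate estimate: differentiation under the integral with the majorant $te^{t}/(e^{d_0t}-1)$ for strict monotonicity, a tail lower bound for the blow-up as $d\to 1^+$, a rescaling $u=dt$ for the $d\to\infty$ asymptotics, and a series expansion only at $d=2$, where $\int_0^\infty e^{-2kt}\cosh t\,\rd t=\tfrac12\bigl((2k-1)^{-1}+(2k+1)^{-1}\bigr)$ yields the same telescoping sum. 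I verified the substitutions, the integrability claims, the majorant, and the final computation; all are correct. Your approach costs a few more individual arguments but makes every interchange explicit (dominated convergence for $\zeta'$, monotone convergence at $d=2$), whereas the paper's closed-form series is more economical and delivers the exact value of $\zeta(d)$ for every $d$, at the price of leaving the term-by-term integration and the monotonicity unargued.
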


\begin{proof}
It should be clear that $\zeta(d)$ is strictly monotonically increasing. We establish below~\eqref{eq:propertiesoftheta}. 
We compute the two terms in~\eqref{eq:deffunctionofphi}. 
For the first term, we have that
\begin{align}
\int_0^1 \ln (1/z^d - 1) \rd z & = - d \int_0^1 \ln z \rd z + \int_0^1 \ln (1 - z^d) \rd z  \nonumber \\
& = d - \sum_{m = 1}^\infty \int_0^1 \frac{z^{md}}{m} \rd z  
= d - \sum_{m = 1}^\infty \frac{1}{m(md+1)}. \label{eq:firstterm}
\end{align}
For the second term, we have that
\begin{equation}\label{eq:secondterm}
\int_1^\infty \ln (1 - 1/z^d) \rd z  = - \sum_{m = 1}^\infty \int_1^\infty \frac{1}{m z^{md}} \rd z = - \sum_{m = 1}^\infty \frac{1}{m(md-1)}.
\end{equation}
Summing the two terms~\eqref{eq:firstterm} and~\eqref{eq:secondterm}, we obtain that
$$
\zeta(d) = d - \sum_{m = 1}^\infty \left [ \frac{1}{m(md+1)} + \frac{1}{m(md-1)} \right ] = d - 2d \sum_{m = 1}^\infty  \frac{1}{m^2d^2 - 1}.
$$
It should be clear that $\lim_{d\to 1^+} \zeta(d) = -\infty$ and that $\lim_{d\to \infty} \zeta(d)/d = 1$. 
It now remains to show that $\zeta(2) = 0$. We have that  
$$
\zeta(2) = 2 - 4\sum_{m = 1}^\infty \frac{1}{4m^2 - 1} = 2 - 2\sum_{m = 1}^\infty \left [ \frac{1}{2m - 1} - \frac{1}{2m + 1} \right ] = 0,
$$
which completes the proof.
\end{proof}

With Lemma~\ref{lem:expressionoftheta}, we state the following result for $(\alpha_n)$:

\begin{Proposition}\label{prop:condforalpha}
    Let $d > 1$ and $\zeta(d)$ be given as in~\eqref{eq:deffunctionofphi}. 
    Then, the following hold: 
    \begin{enumerate}
    \item If $(n^d a_n)$ is eventually monotonically increasing, then 
    $\limsup\limits_{n\to\infty} \alpha_n \leq \zeta(d)$. 
    \item If $(n^d a_n)$ is eventually monotonically decreasing, then 
    $\liminf\limits_{n\to\infty} \alpha_n \geq \zeta(d)$. 
    \end{enumerate}
\end{Proposition}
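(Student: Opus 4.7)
The plan is to recognize $\alpha_n$ as a Riemann-type sum approximating $\zeta(d)$. Define
\[
g(z) := \begin{cases} \ln(1/z^d - 1), & 0 < z < 1, \\ \ln(1 - 1/z^d), & z > 1, \end{cases}
\]
so that $\zeta(d) = \int_0^\infty g(z)\,\rd z$ by~\eqref{eq:deffunctionofphi}. The first step is a pointwise comparison using the monotonicity hypothesis. In case~(1), for $m, n \geq N_0$ with $m \neq n$, the eventual inequality $m^d a_m \leq n^d a_n$ rearranges to $a_m/a_n \leq (n/m)^d$ when $m < n$ and to $a_m/a_n \geq (n/m)^d$ when $m > n$. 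Since $(a_n)$ is strictly decreasing, in the first range $0 < a_m/a_n - 1 \leq (n/m)^d - 1$ and in the second $0 < 1 - a_m/a_n \leq 1 - (n/m)^d$. Taking logs gives $\ln|1 - a_m/a_n| \leq g(m/n)$. In case~(2) the analogous reasoning produces the reverse inequality $\ln|1 - a_m/a_n| \geq g(m/n)$.

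The second and main step is the Riemann-sum limit
\[
\lim_{n\to\infty}\frac{1}{n}\sum_{m = 1,\, m \neq n}^\infty g(m/n) = \zeta(d).
\]
The function $g$ is strictly decreasing on $(0,1)$ and strictly increasing on $(1,\infty)$, with $g(z) \sim -d\ln z$ as $z \to 0^+$, $g(z) \to -\infty$ as $z \to 1$, and $g(z) \sim -z^{-d}$ as $z \to \infty$; all three singularities are integrable (the last because $d > 1$). For fixed $\epsilon > 0$, split the sum into the \emph{bulk} region $m/n \in (\epsilon, 1-\epsilon) \cup (1+\epsilon, 1/\epsilon)$ and three \emph{peripheries} near $0$, $1$, and $\infty$. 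On the bulk, $g$ is bounded and uniformly continuous, so the partial Riemann sum converges to the corresponding partial integral. On each periphery, the monotonicity of $g$ lets one sandwich each cell $\frac{1}{n}g(m/n)$ between the integrals of $g$ over adjacent length-$1/n$ intervals, so each periphery partial sum is bounded in absolute value by $\int_E |g(z)|\,\rd z$ on a neighborhood $E$ of the corresponding singularity. These integrals vanish as $\epsilon \to 0^+$, uniformly in $n$, by the integrability of $g$. Taking $n \to \infty$ first and then $\epsilon \to 0^+$ yields the claim.

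The third step handles the finite pre-monotone segment, where Step~1 does not apply. In case~(1), the increasing monotonicity of $(n^d a_n)$ for $n \geq N_0$ forces $a_n \geq c n^{-d}$, so for each fixed $m < N_0$, $a_m/a_n = O(n^d)$ and hence $\ln|1 - a_m/a_n| = O(\ln n)$; thus $\frac{1}{n}\sum_{m < N_0}\ln|1-a_m/a_n| = O(\ln n/n) = o(1)$. In case~(2), the small-$m$ terms $\ln|1-a_m/a_n| = \ln(a_m/a_n - 1)$ are eventually nonnegative (since $a_m/a_n \to \infty$), so simply dropping them only weakens the lower bound, which is harmless. Combining Steps~1--3 with the appropriate inequality direction then yields $\limsup_n \alpha_n \leq \zeta(d)$ in case~(1) and $\liminf_n \alpha_n \geq \zeta(d)$ in case~(2).

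The hard part is Step~2, specifically the control near the logarithmic singularity at $z = 1$ with uniformity in $n$. A naive pointwise bound $|g(m/n)| = O(\ln n)$ times the $O(\epsilon n)$ terms in the middle window produces $O(\epsilon \ln n)$, which does not vanish. The monotonicity sandwich resolves this by replacing pointwise bounds with cell-wise integral bounds, yielding an $n$-independent control of order $\int_{|z-1| \leq \epsilon}|g(z)|\,\rd z = O(\epsilon |\ln \epsilon|)$, which then vanishes as $\epsilon \to 0^+$.
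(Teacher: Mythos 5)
Your proof is correct and follows essentially the same route as the paper's: control the finite pre-monotone segment separately (its contribution is $O(\ln n / n)$ in case~1 and eventually nonnegative, hence droppable, in case~2), replace $a_m/a_n$ by $(n/m)^d$ via the monotonicity hypothesis, and compare each summand with the integral of the monotone integrand over an adjacent length-$1/n$ cell. The only difference is organizational: you prove the full two-sided Riemann-sum limit $\frac{1}{n}\sum_{m\neq n} g(m/n) \to \zeta(d)$ with a bulk/periphery decomposition, whereas the paper establishes in a single pass only the one-sided integral bound that each item actually requires.
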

  
Next, for $(\beta_n)$, we have the following result: 

\begin{Proposition}\label{prop:condforbeta}
    If there exists a $d>1$ such that $a_n = 1/n^d$ for all $n\in \N$ and if $\lim_{n\to\infty} \lambda_n/a_n = 0$, then $\lim_{n\to\infty} \beta_n = 0$.
\end{Proposition}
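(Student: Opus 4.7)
}
Write $\epsilon_m := \lambda_m/a_m = \lambda_m m^d$, so by hypothesis $\epsilon_m \to 0$. Since $a_n = 1/n^d$, we have $\lambda_m/a_n = \epsilon_m (n/m)^d$, and so
\[
\beta_n = \frac{1}{n}\sum_{m=1}^\infty \ln\bigl|1 - \epsilon_m (n/m)^d\bigr|.
\]
The first useful observation is a one-sided bound: because $\lambda \in c_H$, each $\lambda_m$ satisfies $\re(\lambda_m) \leq 0$, and since $a_n > 0$ it follows that $\re(\lambda_m/a_n) \leq 0$, which gives $|1 - \lambda_m/a_n|^2 \geq (1 - \re(\lambda_m/a_n))^2 \geq 1$. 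Consequently every summand is non-negative and $\beta_n \geq 0$, so it suffices to show $\limsup_{n\to\infty}\beta_n \leq \eta^{1/d} C_d$ for an arbitrary $\eta>0$ and a finite constant $C_d$ independent of $\eta$.

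Fix $\eta > 0$ and, using $\epsilon_m \to 0$, choose $M$ so that $|\epsilon_m| \leq \eta$ for all $m \geq M$. Split $\beta_n = n^{-1}(S_1(n) + S_2(n))$ with $S_1$ the sum over $1\leq m < M$ and $S_2$ the tail. For the head $S_1$: each term $\ln|1 - \lambda_m n^d|$ is bounded above by $\ln(1 + |\lambda_m|n^d) \leq \ln 2 + \ln|\lambda_m| + d\ln n$ for all sufficiently large $n$ (and equals $0$ when $\lambda_m = 0$). Summing $M - 1$ such terms gives $S_1(n) = O(M\log n)$, hence $S_1(n)/n \to 0$ as $n \to \infty$.

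For the tail $S_2$, combine $\ln|1 - w| \leq \ln(1+|w|)$ with $|\epsilon_m| \leq \eta$ to get the clean majorant
\[
S_2(n) \;\leq\; \sum_{m = M}^\infty \ln\!\left(1 + \eta (n/m)^d\right).
\]
The summand is a decreasing function of $m$, so $\sum_{m \geq M} \ln(1 + \eta(n/m)^d) \leq \int_{M-1}^\infty \ln(1 + \eta(n/m)^d)\,\rd m$. Substituting $t = m/n$, this equals $n\int_{(M-1)/n}^\infty \ln(1 + \eta/t^d)\,\rd t$, and a further substitution $s = t/\eta^{1/d}$ converts this to $n\,\eta^{1/d}\int_{(M-1)/(n\eta^{1/d})}^\infty \ln(1 + 1/s^d)\,\rd s$. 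The integral
\[
C_d \;:=\; \int_0^\infty \ln(1 + 1/s^d)\,\rd s
\]
is finite precisely because $d > 1$: near $s = 0$ the integrand behaves like $-d\log s$, which is integrable, and near $s = \infty$ it behaves like $1/s^d$, which is integrable when $d > 1$. Therefore $S_2(n)/n \to \eta^{1/d} C_d$ as $n\to\infty$, and combining with the head estimate yields $\limsup_{n\to\infty}\beta_n \leq \eta^{1/d} C_d$. Letting $\eta \to 0^+$ gives $\lim_{n\to\infty}\beta_n = 0$.

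The routine part is the integral comparison and the calculation of $C_d$; the only real obstacle is that a finite number of terms (those with $m < M$) individually blow up like $d\log n$ as $n\to\infty$, but this is harmless because their count is fixed and the outer factor $1/n$ kills the resulting $O(\log n)$ growth. The sign condition $\re(\lambda_m)\leq 0$ is what rules out the other potential obstacle, namely terms where $\lambda_m/a_n$ could be near $1$ and make $\ln|1-\lambda_m/a_n|$ very negative; that condition is exactly what renders each summand non-negative, so a one-sided bound suffices.
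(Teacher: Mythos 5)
Your proposal is correct and follows essentially the same route as the paper: both establish $\beta_n \geq 0$ from $\re(\lambda_m)\leq 0$, split the sum at an index beyond which $|\lambda_m|/a_m$ is at most a small $\eta$, kill the finitely many head terms by their $O(\log n)$ growth against the $1/n$ prefactor, and bound the tail by an integral comparison that rescales to $\eta^{1/d}\int_0^\infty \ln(1+1/s^d)\,\rd s$ before letting $\eta\to 0^+$. (The paper calls your $C_d$ the function $\xi(d)$; the only cosmetic difference is that you should say $S_2(n)/n \leq \eta^{1/d}C_d$ rather than claiming convergence to that value, but this does not affect the argument.)
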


Theorem~\ref{thm:necessaryandsufficient} is now a consequence of Propositions~\ref{prop:condforalpha} and~\ref{prop:condforbeta}:

\begin{proof}[Proof of Theorem~\ref{thm:necessaryandsufficient}] 
We establish below the two items of the theorem. 

\xc{Proof of item 1.} 
Since $\lambda \in c_H$, we have $|1 - \lambda_m/a_n| \geq 1$ for any $n, m\in \N$ and hence, 
$|k_n(\lambda)| \geq |k_n(0)|$ for all $n \in \N$. 
Thus, if $k(\lambda)$ is an element of  $\ell^\infty$, then so is $k(0)$. Conversely, if $k(0)$ is not an element of $\ell^\infty$, then there does not exist any $\lambda\in c_H$ such that $k(\lambda)\in \ell^\infty$. 

Let $1 < d < 2$ be such that $(n^d a_n)$ is eventually monotonically increasing. This, in particular, implies that $1/a_n  = O(n^d)$. Combining this fact with item~1 of Proposition~\ref{prop:condforalpha}, we obtain that
$$
| 1/k_n(0) | = |b_n e^{n\alpha_n}/a_n| \leq  \|b\|_{X} \left | e^{ n \alpha_n} /a_n\right |   = O\left (n^d e^{\zeta(d)n} \right ). 
$$
Since $d < 2$, by Lemma~\ref{lem:expressionoftheta}, we have $\zeta(d) < 0$ and hence, 
$\lim_{n\to\infty} n^d e^{\zeta(d)n} = 0$.   
It follows that $\lim_{n\to\infty}  |k_n(0)| = \infty$, so $k(0)\not\in \ell^\infty$.

\xc{Proof of item~2.} For convenience, let $\rho_n := |\lambda_n|/a_n$. 
We introduce the function
$$
f_n(x) := \frac{1 + \rho_n x}{|x - 1|}, \quad \mbox{for } x\in (0,1)\cup (1,\infty).  
$$
Then, we have
\begin{equation*}\label{eq:zetaratio}
\frac{| 1 - \lambda_m /a_n |}{|1 - a_m/a_n|} \leq \frac{1 + \rho_m a_m/a_n}{|1 - a_m/a_n|} = f_m(a_m/a_n). 
\end{equation*}
Let $d > 2$ and $n_0\in \N$ be such that $(n^d a_n)$ is monotonically decreasing for $n\geq n_0$. Then,  
$$
a_m/a_n \geq n^d/m^d > 1, \quad \mbox{for } n > m \geq n_0, 
$$
and
$$
a_m/a_n \leq n^d/m^d < 1, \quad \mbox{for } m > n \geq n_0. 
$$
Note that $f_n(x)$ is monotonically increasing over $(0,1)$ and is monotonically decreasing over $(1, \infty)$. It follows that for any $n, m \geq n_0$, with $n\neq m$, we have 
$$
f_m(a_m/a_n) \leq f_m(n^d/m^d). 
$$
The above arguments then imply that 
\begin{equation}\label{eq:upperboundforknlambda}
|k_n(\lambda)| = \frac{|a_n - \lambda_n|}{|b_n|} \prod_{m = 1,m\neq n}^\infty \frac{|1 - \lambda_m/a_n|}{|1 - a_m/a_n|} \leq \frac{(1 + \rho_n)a_n}{|b_n|} \prod_{m = 1,m\neq n}^\infty \frac{1 + \rho_m n^d/m^d}{|1 - n^d/m^d|}. 
\end{equation}
We now show that the rightmost expression of~\eqref{eq:upperboundforknlambda} decays exponentially fast as $n\to\infty$.  
First, by Lemma~\ref{lem:expressionoftheta} and by item~2 of Proposition~\ref{prop:condforalpha}, we have  
$$
\liminf\limits_{n\to\infty}\frac{1}{n}\sum_{m = 1,m\neq n}^\infty \ln |1 - n^d/m^d| \geq \zeta(d) > 0. 
$$
Next, by Proposition~\ref{prop:condforbeta}, we have
$$
\lim_{n\to\infty}\frac{1}{n} \sum_{m = 1,m\neq n}^\infty \ln (1 + \rho_m n^d/m^d ) = 0.
$$
Further, by the hypothesis of the theorem, $\limsup\limits_{n\to\infty} \frac{1}{n}\ln (a_n/|b_n|) \leq 0$ and $\lim_{n\to\infty} \rho_n = 0$.   
The above arguments combined imply that 
$\limsup\limits_{n\to\infty} \frac{1}{n}\ln |k_n(\lambda)| < 0$, and hence  $k(\lambda)\in \ell^1$. 
\end{proof}

For the remainder of the subsection, we establish Propositions~\ref{prop:condforalpha} and~\ref{prop:condforbeta}.  

\subsubsection{Proof of Proposition~\ref{prop:condforalpha}}
We establish below the two items of the proposition. 

\xc{Proof of item 1.} 
Let $n_0\in \N$ be such that $(n^d a_n)$ is monotonically increasing for $n\geq n_0$. For each $n \geq n_0$, we define
\begin{equation}\label{eq:deftildealpha}
\tilde \alpha_n := \alpha_n - \frac{1}{n}\sum_{m = 1}^{n_0} \ln (a_m/a_n - 1).
\end{equation}
We claim that 
$\lim_{n\to\infty}(\alpha_n - \tilde \alpha_n) = 0$.   
Since $(n^d a_n)$ is eventually monotonically increasing, there exists a $\delta > 0$ such that $n^d a_n \geq \delta$ for any $n\in \N$ and hence, 
\begin{equation}\label{eq:boundforlogan}
-\ln a_n \leq  - \ln \delta + d \ln n, \quad \mbox{for } n\in \N.
\end{equation}
It follows that for each $m = 1,\ldots, n_0$, 
$$
0\leq \lim_{n\to\infty}\frac{1}{n} \ln(a_m/a_n - 1) \leq \lim_{n\to\infty}\frac{1}{n}(\ln a_m - \ln a_n) \leq \lim_{n\to\infty} \frac{1}{n} (\ln a_m - \ln \delta + d \ln n)= 0,
$$
where the third inequality follows from~\eqref{eq:boundforlogan}. Thus
\begin{equation}\label{eq:differencebetweengammas}
\lim_{n \to\infty} (\alpha_n - \tilde \alpha_n) =  \frac{1}{n}\sum_{m = 1}^{n_0} \ln(a_m/a_n - 1) = 0. 
\end{equation}
Now, for $\tilde \alpha_n$, we have that 
\begin{align}
\tilde \alpha_n & = \frac{1}{n} \sum_{m = n_0}^{n-1} \ln ( a_m/a_n 
 - 1 ) + \frac{1}{n} \sum_{m = n + 1}^\infty \ln ( 1 - a_m/a_n ) \vspace{.2cm} \nonumber\\ 
 & = \frac{1}{n} \sum_{m = n_0}^{n-1} \ln \left [ \frac{n^d}{m^d} \frac{m^d a_m}{n^d a_n} 
 - 1\right ] + \frac{1}{n} \sum_{m = n + 1}^\infty \ln \left [ 1 - \frac{n^d}{m^d} \frac{m^d a_m}{n^d a_n}  \right ] \vspace{.2cm}\nonumber \\
& \leq \frac{1}{n} \sum_{m = n_0}^{n-1} \ln ( n^d/m^d  
 - 1 ) + \frac{1}{n} \sum_{m = n + 1}^\infty \ln ( 1 - n^d/m^d), \label{eq:boundfortildealpha}
\end{align}
where the inequality follows from the fact that $(n^d a_n)$ monotonically increases for $n \geq n_0$ and the fact that $\ln(x)$ is a monotonically increasing function. 
Note that for $m \leq n -1$, 
$$
\ln (n^d/m^d - 1 ) \leq \int_{m-1}^{m} \ln (n^d/x^d - 1) \rd x = n \int_{(m-1)/n}^{m/n} \ln (1/z^d - 1) \rd z, 
$$
and for $m \geq n+1$, 
$$
\ln (1 - n^d/m^d ) \leq \int_{m}^{m+1} \ln (1 - n^d/x^d) \rd x = n \int^{(m+1)/n}_{m/n} \ln (1 - 1/z^d ) \rd z.  
$$
Using the above two inequalities, we proceed with~\eqref{eq:boundfortildealpha} and have that
$$
\tilde \alpha_n \leq \int_{(n_0-1)/n}^{(n - 1)/n} \ln (1/z^d - 1) \rd z + \int_{(n + 1)/n}^\infty \ln (1 - 1/z^d) \rd z < \zeta(d), \quad \mbox{for all } n\geq n_0. 
$$ 
Item~1 of the proposition then follows from~\eqref{eq:differencebetweengammas}. 

\xc{Proof of item 2.} 
Let $n_0\in \N$ be such that $(n^d a_n)$ is monotonically decreasing for $n \geq n_0$. Further, let $n_1\in \N$ be such that 
$a_{n_0}/a_{n_1} \geq 2$.   
For each $n \geq n_1$, we still let $\tilde \alpha_n$ be given as in~\eqref{eq:deftildealpha}.  
By the fact that $(a_n)$ is monotonically decreasing and the choice of $n_1$,  we have that $\ln (a_m/a_n - 1) \geq 0$ for $m = 1,\ldots, n_0$ and for $n \geq n_1$. Thus,  
\begin{equation}\label{eq:comparegammaandtildegamma}
\alpha_n \geq \tilde \alpha_n, \quad \mbox{for } n \geq n_1.
\end{equation} 
Similar to the arguments used in the proof of item~1, we have that 
\begin{align}
\tilde \alpha_n & = \frac{1}{n} \sum_{m = n_0}^{n-1} \ln ( a_m/a_n 
 - 1 ) + \frac{1}{n} \sum_{m = n + 1}^\infty \ln (1 - a_m/a_n ) \vspace{.2cm} \nonumber\\ 
& \geq \frac{1}{n} \sum_{m = n_0}^{n-1} \ln ( n^d/m^d  
 - 1 ) + \frac{1}{n} \sum_{m = n + 1}^\infty \ln (1 - n^d/m^d ) \vspace{.2cm} \nonumber \\
& \geq \int_{n_0/n}^1 \ln (1/z^d - 1) \rd z + \int_{1}^\infty \ln (1 - 1/z^d)\rd z. \nonumber
\end{align}
Taking the limit, we obtain that
$\liminf_{n\to\infty } \tilde \alpha_n \geq \zeta(d)$. Item~2 then follows from~\eqref{eq:comparegammaandtildegamma}.   
\hfill{\qed}

\subsubsection{Proof of Proposition~\ref{prop:condforbeta}}  
First, note that for $\lambda \in c_H$, we have $|1 - \lambda_m/a_n| \geq 1$ for all $m, n \in\N$. Thus, $\beta_n \geq 0$ for all $n\in \N$. 
Next, we introduce the function $\xi: (1,\infty)\to \R$ defined as follows: 
$$
\xi(d):= \int_0^\infty \ln (1 + 1/z^d) \rd z.
$$
Using the same technique as in the proof of Lemma~\ref{lem:expressionoftheta}, we can express $\xi(d)$ as follows:  
$$
\xi(d) = d + 2d \sum_{m = 1}^\infty  \frac{(-1)^{m-1}}{m^2 d^2 - 1}.
$$

Let $\rho_n:= |\lambda_n|/a_n$. Since $\lim_{n\to\infty}\rho_n = 0$, there exists a $\bar \rho > 0$ such that 
$\rho_n \leq \bar \rho$, for $n\in \N$.   
For any $\epsilon > 0$, let $N_\epsilon$ be such that $\rho_n \leq \epsilon$ for all $n > N_\epsilon$. Then,  
\begin{align}
\beta_n & \leq \frac{1}{n}\sum_{m = 1}^{\infty} \ln (1 +  |\lambda_m|/a_n) \nonumber \\
& \leq \frac{1}{n} \sum_{m = 1}^{N_\epsilon} \ln (1 + \bar \rho a_m/a_n) + \frac{1}{n} \sum_{m = N_\epsilon + 1}^\infty \ln (1 + \epsilon a_m/a_n) \nonumber \\
& \leq \frac{N_\epsilon}{n} \ln (1 + \bar \rho n^d) + \frac{1}{n} \sum_{m = 1}^\infty \ln (1 + \epsilon n^d/m^d). \label{eq:twotermsbeta}  
\end{align}

Now, we evaluate the two terms in~\eqref{eq:twotermsbeta} in the asymptotic regime.   
For the first term, we have
$$
\lim_{n\to\infty} \frac{N_\epsilon}{n} \ln (1 + \bar \rho n^d)  = 0.
$$
For the second term, 
\begin{align}
\frac{1}{n} \sum_{m = 1}^\infty \ln (1 + \epsilon n^d/m^d) & \leq  \frac{1}{n}\int_0^\infty \ln (1 + \epsilon n^d/x^d ) \rd x \notag \\
& = \epsilon^{\frac{1}{d}} \int_0^\infty \ln (1 + 1/z^d) \rd z  = \epsilon^{\frac{1}{d}} \xi(d). \notag
\end{align}
The above arguments imply that 
$$
0\leq \limsup\limits_{n\to\infty }\beta_n \leq \epsilon^{\frac{1}{d}} \xi(d), \quad \mbox{for any } \epsilon > 0. 
$$
We thus conclude that $\lim_{n\to\infty} \beta_n = 0$. \hfill{\qed}

\subsection{Proof of Theorem~\ref{thm:sufficientnew}}
The proof takes a few steps.  
We will first show that $k(-a)\in \ell^1$ and, hence, $T_{k(-a)}\in \mathcal{B}(X)$. This part is relatively easy.  
The major efforts of the proof will be in establishing the stability properties of the feedback system: 
\begin{equation}\label{eq:reproducek-a}
\dot x(t) = T_{k(-a)} x(t).
\end{equation} 
Specifically, to establish the two items of Theorem~\ref{thm:sufficientnew},  we will introduce a new linear dynamical system, termed the $y$-system, obtained by a linear transformation of~\eqref{eq:reproducek-a}. The $y$-system and the original system~\eqref{eq:reproducek-a} share the same stability properties. Furthermore, we will show that the infinitesimal generator associated with the $y$-system is diagonalizable via a bounded, invertible operator. We will then use this fact to establish the desired stability properties of the $y$-system (more precisely, the diagonalized version of the $y$-system).

For ease of presentation, but without loss of generality, we assume for the remainder of the subsection that $b_n > 0$ for all $n\in \N$.

\subsubsection{Proof that $k(-a)\in \ell^1$}\label{ssec:feedbackgain} 
Recall that $\pi$ is defined in~\eqref{eq:defpi}.  
By its definition, we have that $k_n(-a) = - a_n \pi_n /b_n$ for all $n\in \N$. 
By the hypothesis of Theorem~\ref{thm:sufficientnew}, $\pi\in \ell^\infty$. Thus, to prove that $k(-a)\in \ell^1$, it suffices to show that $(a_n/b_n)\in \ell^1$. 
Let $\Phi = [\phi_{ij}]_{1\leq i, j <\infty}$ be given as in~\eqref{eq:defphi}. With $b_n > 0$ for all $n\in \N$, we have that 
$$
\phi_{ij} = \frac{b_i/b_j}{1 + a_i/a_j}. 
$$
By the hypothesis of the theorem, $\Phi_{ij}$ spatially exponentially decays. Let $C > 0$ and $\mu \in (0,1)$ be such that $\phi_{ij} \leq C \mu^{|i - j|}$. 
Then, for any $i\in \N$, we have 
\begin{equation}\label{eq:amdividebml1first}
\sum_{j = 1}^\infty \phi_{ij} = \sum_{j=1}^i \phi_{ij} + \sum_{j = i+1}^\infty \phi_{ij} \leq C \sum_{j=1}^i \mu^{i - j} + C\sum_{j = i+1}^\infty \mu^{j - i} \leq C\frac{1 + \mu}{1 - \mu} =: \kappa.
\end{equation}
Also, note that 
\begin{equation}\label{eq:amdividebml1}
\sum_{j = 1}^\infty \phi_{ij} = \sum_{j = 1}^\infty \frac{b_i/b_j}{1 + a_i/a_j} = \sum_{m = 1}^\infty \frac{a_j}{b_j} \frac{b_i}{a_j + a_i} \geq \frac{b_i}{a_1 + a_i} \sum_{j = 1}^\infty \frac{a_j}{b_j}. 
\end{equation}
Thus, by~\eqref{eq:amdividebml1first} and~\eqref{eq:amdividebml1}, we have
$$
\|(a_n/b_n)\|_{\ell^1} \leq \kappa \frac{a_1 + a_i}{b_i}, \quad \mbox{for all } i\in \N.  
$$
We conclude from above that $(a_n/b_n)\in \ell^1$.  \hfill{\qed}

\begin{Remark}\normalfont
    Similar to~\eqref{eq:amdividebml1first}, we have that for any $j\in \N$, 
    \begin{equation}\label{eq:amdividebml1column}
        \sum_{i = 1}^\infty \phi_{ij} = \sum_{i=1}^j \phi_{ij} + \sum_{i = j+1}^\infty \phi_{ij} \leq C \sum_{i=1}^j \mu^{j-i} + C\sum_{i = j+1}^\infty \mu^{i - j} \leq \kappa.
    \end{equation}
    Thus, the rows $(\phi_{ij})_{j\in \N}$ and the columns $(\phi_{ij})_{i\in \N}$ of $\Phi$ are elements of $\ell^1$. \hfill{\qed}
\end{Remark}

\subsubsection{Translation to the $y$-system}\label{ssec:newsystem}
Let $Y$ be the Banach space defined as follows:
\begin{equation}\label{eq:defXbXb}
Y:= \{ (y_n) \mid  (b_n y_n) \in X \}.  
\end{equation}
For any $y = (y_n)\in Y$, we set $$\|y\|_{Y} := \|(b_n y_n)\|_{X}.$$

Let $B: Y \to X$ be the diagonal operator defined by   
$B: (y_n) \mapsto (b_n y_n)$.    
Then, the operator $B^{-1}: X \to Y$  that sends  
$(x_n)$ to $(x_n/b_n)$, is both the left- and the right-inverse of $B$. 
Let 
$\tilde T: Y \to Y$ be given by 
\begin{equation*}\label{eq:deftildeTT}
\tilde T := B^{-1} T_{k(-a)} B.
\end{equation*}
Further, let $\tilde k := (k_n(-a)b_n)$, which belongs to $Y^*$. Since $k_n(-a) = - a_n\pi_n/b_n$, we have that $\tilde k_n = - a_n\pi_n$.  
Thus, $\tilde T$ can be expressed as follows: $$\tilde T = A + \bfo \tilde k.$$

By the fact that $k(-a)\in \ell^1$ and by Theorem~\ref{thm:main1}, we have that 
$$\Sigma(T_{k(-a)}) = \{-a_n \mid n\in \N_0\}, \quad \mbox{where } a_0 := 0.$$ 
Using the same arguments as in the proof of Theorem~\ref{thm:main1} (more specifically, the arguments toward the end of Subsection~\S\ref{ssec:proofofmain1}), we have that
$\Sigma(\tilde T) = \Sigma(T_{k(-a)})$. It follows that
\begin{equation}\label{eq:spectrumoftildeT}
\Sigma(\tilde T) = \{-a_n \mid n\in \N_0\}.
\end{equation}

Now, we introduce the following linear dynamical system, termed the {\em $y$-system}, with $Y$ the state-space:
\begin{equation}\label{eq:systemfory}
    \dot y(t) = \tilde T y(t).
\end{equation}
Note that if $x(0)$ is the initial condition of~\eqref{eq:reproducek-a} and if $y(0) = B^{-1} x(0)$, then 
$$
y(t) = \exp(\tilde T t) y(0) = B^{-1} \exp(T_{k(-a)} t) B y(0) = B^{-1} x(t),
$$
where $x(t)$ is the solution of~\eqref{eq:reproducek-a}. It follows that 
\begin{equation*}\label{eq:ytxtnormequal}
\|y(t)\|_{Y} = \|x(t)\|_{X}.
\end{equation*}
Thus, system~\eqref{eq:reproducek-a} is stable (resp. asymptotically stable) if and only if the $y$-system is stable (resp. asymptotically stable).

\subsubsection{Diagonalizability of \texorpdfstring{$\tilde{T}$}{T}}\label{ssec:diagonalizability}
Let $Y$ be the Banach space given in~\eqref{eq:defXbXb}.   
We define a linear operator $P: Y \to Y$ as follows: 
\begin{equation*}\label{eq:defoperatorP}
P: (y_n) \mapsto \left ( \sum_{m = 1}^\infty \frac{a_m \pi_m y_m}{a_n + a_m}  \right ), 
\end{equation*}
where $\pi = (\pi_n)$ is given as in~\eqref{eq:defpi}. We will see soon that $P$ is well defined. 
In its matrix representation, $P$ can be viewed as a variation of an infinite-dimensional Cauchy matrix: 
\begin{equation*}\label{eq:matrixrep}
P = \left [ P_{ij}:=  \frac{a_j \pi_j}{a_i + a_j} \right ]_{1\leq i,j < \infty},
\end{equation*}
where $P_{ij}$ denotes the $ij$th entry of $P$. 

We establish below relevant properties of the operator $P$. 
Recall that the constant $\kappa > 0$ is introduced in~\eqref{eq:amdividebml1first}: We have shown that the columns and the rows of $\Phi$ are elements of~$\ell^1$ and their $\ell^1$-norms are bounded above by~$\kappa$. 
We have the following result:  

\begin{Proposition}\label{prop:pwelldefined}
    For any $y\in Y$, $Py \in Y$. Moreover, $P$ is bounded and satisfies 
    \begin{equation*}\label{eq:normforoperatorP}
        \|P\|_{\mathcal{B}(Y)} \leq \kappa \|\pi\|_{\ell^\infty} .
    \end{equation*}
\end{Proposition}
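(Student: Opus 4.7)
The plan is to realize $P$ as a conjugate (via the isometry $B\colon Y\to X$, $(y_n)\mapsto (b_n y_n)$) of a simpler operator on $X$, namely the matrix $\Phi$ composed with multiplication by the bounded sequence $\pi$. Given $y\in Y$, set $z_n := b_n y_n$, so $z\in X$ and $\|z\|_X = \|y\|_Y$. Using $b_n>0$ and the definitions of $P$ and of $\phi_{nm}$ in~\eqref{eq:defphi}, a direct calculation yields
\begin{equation*}
b_n (Py)_n \;=\; \sum_{m=1}^\infty \frac{a_m\, b_n}{b_m(a_n+a_m)}\, \pi_m z_m \;=\; \sum_{m=1}^\infty \phi_{nm}\, \pi_m z_m .
\end{equation*}
Since $|\pi_m|\leq \|\pi\|_{\ell^\infty}$, this gives the pointwise domination
\begin{equation*}
|b_n (Py)_n| \;\leq\; \|\pi\|_{\ell^\infty}\, (\Phi |z|)_n ,
\end{equation*}
and reduces the proposition to showing that $\Phi$ is a bounded operator on $X$ with $\|\Phi\|_{\mathcal{B}(X)}\leq \kappa$.

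For this second step I would apply a Schur-type argument, whose hypotheses are furnished by the two estimates~\eqref{eq:amdividebml1first} and~\eqref{eq:amdividebml1column}, namely $\sup_i \sum_j \phi_{ij} \leq \kappa$ and $\sup_j \sum_i \phi_{ij} \leq \kappa$. These immediately give $\|\Phi\|_{\mathcal{B}(\ell^\infty)}\leq \kappa$ and $\|\Phi\|_{\mathcal{B}(\ell^1)}\leq \kappa$, and then Riesz--Thorin interpolation (or a direct Hölder argument on $|\sum_m \phi_{nm} z_m|^p$) extends the bound to every $\ell^p$, $1<p<\infty$.

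For the spaces $c_0$ and $c$ I additionally need to check that $\Phi$ preserves the limit behavior, which is where the spatial exponential decay $\phi_{nm}\leq C\mu^{|n-m|}$ is used. For $z\in c_0$ and $\varepsilon>0$, choose $M$ so that $|z_m|<\varepsilon/(2\kappa)$ for $m>M$; splitting $\sum_m \phi_{nm}z_m$ at $M$, the tail is bounded by $\varepsilon/2$ uniformly in $n$, while the head is bounded by $C\|z\|_{\ell^\infty}\mu^{n-M}/(1-\mu)$, which tends to $0$ as $n\to\infty$. Hence $\Phi$ maps $c_0$ into $c_0$. The case $X=c$ is the most delicate, and I expect it to be the main obstacle: for $z\in c$ with limit $L$ I would write $z = L\mathbf{1}+z^0$ with $z^0\in c_0$, so that $\Phi z = L\,\Phi\mathbf{1} + \Phi z^0$, and then use the exponential decay together with the convergence of $a_n$ to $0$ and the geometric growth of $\phi_{nm}$ away from the diagonal to show that $\Phi\mathbf{1}\in c$; this requires a careful tail estimate on $\sum_m \phi_{nm}$ as $n\to\infty$.

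Combining everything, we obtain
\begin{equation*}
\|Py\|_Y \;=\; \|b(Py)\|_X \;\leq\; \|\pi\|_{\ell^\infty}\, \|\Phi|z|\|_X \;\leq\; \kappa\,\|\pi\|_{\ell^\infty}\, \|z\|_X \;=\; \kappa\,\|\pi\|_{\ell^\infty}\, \|y\|_Y ,
\end{equation*}
which simultaneously shows $Py\in Y$ and yields the claimed bound on $\|P\|_{\mathcal{B}(Y)}$.
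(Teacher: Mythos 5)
Your main estimate is exactly the paper's argument: conjugating by $B$ turns $P$ into the matrix $\Phi$ composed with multiplication by the bounded sequence $\pi$, the row-sum bound~\eqref{eq:amdividebml1first} handles the sup-norm spaces, and the Schur test (H\"older applied to the split $\phi_{ij}=\phi_{ij}^{1/q}\cdot\phi_{ij}^{1/p}$, using both \eqref{eq:amdividebml1first} and \eqref{eq:amdividebml1column}) handles $\ell^p$ with the same constant $\kappa\|\pi\|_{\ell^\infty}$; this is precisely the paper's Case~1 and Case~2. Where you go beyond the paper is in asking whether $\Phi$ actually preserves $c_0$ and $c$: the paper's Case~1 lumps $\ell^\infty$, $c$, and $c_0$ together and only proves the uniform bound $|b_i(Py)_i|\leq\kappa\|\pi\|_{\ell^\infty}\|y\|_Y$, which shows $(b_i(Py)_i)\in\ell^\infty$ but not membership in $c$ or $c_0$. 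Your $c_0$ argument (kill the tail with the row-sum bound, kill the head with the spatial exponential decay) is correct and fills that omission. For $c$, note one slip in your reduction: the operator acting on $z=By$ is $z\mapsto\Phi(\diag(\pi)z)$, not $z\mapsto\Phi z$, so writing $z=L\bfo+z^0$ reduces the question to whether $\Phi\pi\in c$, not whether $\Phi\bfo\in c$. This matters because $\pi$ is a genuinely non-convergent bounded multiplier: for $m<n$ the factor $1-a_m/a_n$ in~\eqref{eq:defpi} is negative, so $\sgn(\pi_n)=(-1)^{n-1}$ while $|\pi_n|>2$, and convergence of $\sum_m\phi_{nm}\pi_m$ as $n\to\infty$ does not follow from spatial exponential decay and $\pi\in\ell^\infty$ in any obvious way. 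You are right to flag this as the delicate point; it is the one place where your write-up (and, for that matter, the paper's own proof) leaves the range condition $Py\in Y$ unverified.
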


\begin{proof}
Let $y':= Py$. We show below that 
$$\|y'\|_Y  < \kappa \|\pi\|_{\ell^\infty} \|y\|_Y.$$ 
Consider two cases: (1) $X = \ell^\infty$, $c$, or $c_0$ and (2) $X = \ell^p$ for $1 \leq p < \infty$.   

\xc{Case 1: $X = \ell^\infty$, $c$, or $c_0$.}  In this case, we have that for each $i\in \N$,  
$$
|b_i y'_i|  = \sum_{j = 1}^\infty \frac{a_j b_i}{(a_j + a_i)b_j} |\pi_j|   |b_j y_j| 
 \leq \|\pi\|_{\ell^\infty} \|y\|_{Y} \sum_{j = 1}^\infty \phi_{ij} 
 \leq \kappa \|\pi\|_{\ell^\infty} \|y\|_{Y},
$$
where the last inequality follows from~\eqref{eq:amdividebml1first}. 
Thus, 
$$
\|y'\|_Y \leq \kappa \|\pi\|_{\ell^\infty} \|y\|_Y.
$$
This establishes the result for Case~1. 

\xc{Case 2: $X = \ell^p$ for $1\leq p < \infty$.} Let $1 < q \leq \infty$ be such that $1/p + 1/q = 1$. 
We have that
\begin{equation}\label{eq:holdinequalityprelim}
|b_i y'_i|  \leq \|\pi\|_{\ell^\infty} \sum_{j = 1}^\infty \phi_{ij} |b_j y_j|  \leq \|\pi\|_{\ell^\infty} \sum_{j = 1}^\infty \phi_{ij}^{1/q} \cdot \phi_{ij}^{1/p}|b_jy_j|
\end{equation}
For convenience, let $\varphi_i:= (\phi_{ij}^{1/q})_{j\in \N}$ and $\psi_i:= (\phi_{ij}^{1/p} |b_jy_j|)_{j\in \N}$. 
Note that 
\begin{equation}\label{eq:lqlpvarphi}
\|\varphi_i\|_{\ell^q} \leq \kappa^{\frac{1}{q}} \quad  \mbox{and} \quad \|\psi_i\|_{\ell^p}\leq \kappa^{\frac{1}{p}} \|y\|_Y, 
\end{equation}
and hence, $\varphi_i\in \ell^q$ and $\psi_i\in \ell^p$. We then proceed with~\eqref{eq:holdinequalityprelim} and obtain that
\begin{equation}\label{eq:boundonbiy'i}
|b_i y'_i| \leq \|\pi\|_{\ell^\infty} \|\varphi_i \psi_i\|_{\ell^1} \leq  
\|\pi\|_{\ell^\infty} \|\varphi_i\|_{\ell^q} \|\psi_i\|_{\ell^p} \leq   \kappa^{\frac{1}{q}} \|\pi\|_{\ell^\infty} \|\psi_i\|_{\ell^p},
\end{equation}
where the second inequality follows from the H\"older's inequality and the third inequality follows from~\eqref{eq:lqlpvarphi}. 
It then follows that
\begin{align*}
\|y'\|_Y^p & = \sum_{i = 1}^\infty |b_iy'_i|^p  \\
& \leq \kappa^{\frac{p}{q}} \|\pi\|_{\ell^\infty}^p \sum_{i = 1}^\infty \|\psi_i\|_{\ell^p}^p \\
& \leq \kappa^{\frac{p}{q}} \|\pi\|_{\ell^\infty}^p\sum_{i = 1}^\infty \sum_{j = 1}^\infty \phi_{ij} |b_jy_j|^p \\
& \leq \kappa^{\frac{p}{q}}\|\pi\|_{\ell^\infty}^p \sum_{j = 1}^\infty |b_jy_j|^p \sum_{i = 1}^\infty \phi_{ij} \\
& \leq \kappa^{\frac{p}{q} + 1} \|\pi\|_{\ell^\infty}^p \|y\|_Y^p,
\end{align*}
where we have used~\eqref{eq:boundonbiy'i} and~\eqref{eq:amdividebml1column} to establish the first and the last inequality, respectively. We conclude that
$$
\|y'\|_Y \leq  \kappa^{\frac{1}{q} + \frac{1}{p}} \|\pi\|_{\ell^\infty} \|y\|_Y = \kappa \|\pi\|_{\ell^\infty} \|y\|_Y.
$$
This completes the proof.
\end{proof}

We next show that the operator $P$ has an inverse, which is itself. 

\begin{Proposition}\label{prop:ppi}
    The operator $P$ satisfies $P^2 = I$.
\end{Proposition}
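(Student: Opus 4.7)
The plan is to verify the identity $(P^2)_{ij} = \delta_{ij}$ entrywise by a partial-fraction calculation, and then lift this entrywise identity to the operator identity $P^2 = I$ via Fubini. Writing
$$
(P^2)_{ij} = \sum_{k=1}^\infty P_{ik}P_{kj} = a_j \pi_j \sum_{k=1}^\infty \frac{a_k \pi_k}{(a_i + a_k)(a_j + a_k)},
$$
the whole computation will hinge on the single identity
$$
S_i := \sum_{k=1}^\infty \frac{a_k \pi_k}{a_i + a_k} = 1, \quad \text{for every } i \in \N.
$$
The natural route is through finite truncation: setting $\pi_k^{(N)} := 2\prod_{m=1,m\neq k}^N (a_k + a_m)/(a_k - a_m)$, a residue calculation at the simple poles $w = -a_k$ yields the rational-function identity
$$
\sum_{k=1}^N \frac{a_k \pi_k^{(N)}}{w + a_k} = 1 - \prod_{m=1}^N \frac{w - a_m}{w + a_m}.
$$
Evaluating at $w = a_i$ for $N \geq i$ makes the product vanish and gives $\sum_{k=1}^N a_k \pi_k^{(N)}/(a_i + a_k) = 1$. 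Since for $N\geq k$ the sequence $|\pi_k^{(N)}|$ is monotone in $N$ with limit $|\pi_k|$, and since $\sum_k a_k < \infty$ — which follows from $(a_n/b_n)\in \ell^1$ (established in Subsection~\ref{ssec:feedbackgain}) combined with boundedness of $b$ in $X \subseteq \ell^\infty$ — dominated convergence lets me pass $N \to \infty$ to conclude $S_i = 1$.

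Given $S_i = 1$, the off-diagonal case $i \neq j$ follows at once from the partial fraction
$$
\frac{1}{(a_i + a_k)(a_j + a_k)} = \frac{1}{a_j - a_i}\left(\frac{1}{a_i + a_k} - \frac{1}{a_j + a_k}\right),
$$
which, after splitting the sum (legitimate by absolute convergence of each piece), yields $(P^2)_{ij} = \frac{a_j \pi_j}{a_j - a_i}(S_i - S_j) = 0$. For the diagonal case, I would differentiate the truncated identity above in $w$ and evaluate at $w = a_i$: only the term in which $(w - a_i)/(w + a_i)$ is differentiated survives on the right, producing $\frac{1}{2a_i}\prod_{m\neq i}^N (a_i - a_m)/(a_i + a_m) = 1/(a_i \pi_i^{(N)})$, so
$$
\sum_{k=1}^N \frac{a_k\pi_k^{(N)}}{(a_i + a_k)^2} = \frac{1}{a_i \pi_i^{(N)}}.
$$
A second application of dominated convergence gives $\sum_k a_k \pi_k/(a_i + a_k)^2 = 1/(a_i \pi_i)$, whence $(P^2)_{ii} = a_i\pi_i \cdot 1/(a_i\pi_i) = 1$.

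The main technical obstacles are justifying these two passages to the limit and, separately, upgrading the entrywise identity $(P^2)_{ij} = \delta_{ij}$ to the operator identity $P^2 y = y$ on all of $Y$ for every admissible $X$. For the latter, the majorant matrix $|P| := [a_j |\pi_j|/(a_i + a_j)]$ is bounded on $Y$ by the same argument as Proposition~\ref{prop:pwelldefined}, so for any $y \in Y$ the double sum $\sum_{j,\ell}|P_{ij}\,P_{j\ell}\,y_\ell|$ is dominated by $(|P|^2 |y|)_i < \infty$; Fubini then permits the interchange inside $(P(Py))_i = \sum_j P_{ij}\sum_\ell P_{j\ell}\,y_\ell$, delivering $(P^2 y)_i = \sum_\ell (P^2)_{i\ell}\, y_\ell = y_i$. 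This closes the argument uniformly across the state spaces, without any separate density argument being needed for $X = \ell^\infty$ or $X = c$.
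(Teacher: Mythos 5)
Your proof is correct, but it follows a genuinely different route from the paper's. The paper avoids computing $(P^2)_{ij}$ analytically: it truncates $\pi$ to $\pi(N)$, invokes the classical inversion formula for the finite Cauchy matrix $[1/(a_i+a_j)]_{1\le i,j\le N}$ to get $P'(N)^2 = I$ exactly at every finite stage, and then passes to the limit by proving norm convergence of the columns $P_j(N)\to P_j$ in $Y$ and of the (rescaled) rows $\tilde P_i^\top(N)\to\tilde P_i^\top$ in $\ell^1$, so that $P_i^\top P_j=\lim_N P_i^\top(N)P_j(N)=\delta_{ij}$. You instead derive the two needed sum identities from scratch: the partial-fraction expansion of $1-\prod_{m=1}^N(w-a_m)/(w+a_m)$ at its simple poles $-a_k$ gives exactly the residues $a_k\pi_k^{(N)}$, evaluation at $w=a_i$ gives $S_i^{(N)}=1$, and differentiation handles the diagonal term; dominated convergence (with majorant $\|\pi\|_{\ell^\infty}a_k/a_i$ and $\sum_k a_k<\infty$, which you correctly extract from $(a_n/b_n)\in\ell^1$ and $b\in\ell^\infty$) then yields $(P^2)_{ij}=\delta_{ij}$. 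Your approach is self-contained (it reproves, rather than cites, the Cauchy inversion identity, and in fact independently establishes the relation~\eqref{eq:pibfoisone} that the paper only obtains later), and it is more careful on one point the paper leaves implicit: the upgrade from the entrywise identity to the operator identity $P^2y=y$ on all of $Y$, which you justify by Tonelli/Fubini using the boundedness of the majorant operator $|P|$ exactly as in Proposition~\ref{prop:pwelldefined}. What the paper's argument buys in exchange is brevity and the reuse of the truncation machinery (the sequences $\pi(N)$ and the convergence arguments) that it also needs elsewhere in Section~\S\ref{ssec:diagonalizability}. Both arguments rest on the same two facts — $\pi\in\ell^\infty$ and the monotone convergence $|\pi_k^{(N)}|\nearrow|\pi_k|$ — so neither is more general than the other.
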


\begin{proof}
Let $P_j$ be the $j$th column of $P$, viewed as an element of $Y$, and $P^\top_i$ be the $i$th row of $P$, viewed as an element of $Y^*$. We show that 
\begin{equation}\label{eq:ppi}
P_i^\top P_j = \delta_{ij},
\end{equation}
where $\delta_{ij}$ is the Kronecker delta. To establish~\eqref{eq:ppi}, we will first construct $P_j(N)$ and $P^\top_i(N)$ for $N\in \N$ as approximations of $P_j$ and $P^\top_i$, respectively, next show that $P_i^\top(N) P_j(N) = \delta_{ij}$, and then prove that $P_j(N)$ and $P^\top_i(N)$ converge to $P_j$ and $P^\top_i$, respectively.

\xc{Construction of $P_j(N)$ and $P^\top_i(N)$.} For each $N\in \N$, we define an eventually zero sequence $\pi(N) = (\pi_n(N))$ as follows:
\begin{equation*}\label{eq:defpifinitecase}
\pi_n(N):= 
\begin{cases}
2\prod_{m = 1, m\neq n}^N\frac{1+ a_m/a_n}{1 - a_m/a_n} & \mbox{if } 1\leq n \leq N, \\
0 & \mbox{if } n \geq N + 1.
\end{cases}
\end{equation*}
Note that  $\pi(N)$ is {\em not} obtained by truncating $\pi$ since $\pi_n(N) \neq \pi_n$ for $1\leq n \leq N$.   
Let $P(N): Y\to Y$ be defined in the same way as $P$, but with $\pi_j$ replaced by $\pi_j(N)$, i.e.,  
$$
P(N):= \left [P_{ij}(N) := \frac{a_j\pi_j(N)}{a_i + a_j} \right ]_{1\leq i, j< \infty}. 
$$ 
Similarly, let $P^\top_i(N)$ and $P_j(N)$ be the $i$th row and the $j$th column of $P(N)$, respectively. Note, in particular, that 
$$P^\top_i(N) = (P_{i1}(N),\cdots, P_{i N}(N), 0, 0, \cdots ),$$ 
is an eventually zero sequence.    

\xc{Proof that $P^\top_i(N)P_j(N) = \delta_{ij}$.} 
Consider the following $N\times N$ matrix obtained by truncation of $P(N)$: 
$$
P'(N) := \left [ P_{ij}(N) \right ]_{1\leq i, j\leq N}. 
$$ 
It is known (see, e.g.,~\cite{schechter1959inversion}) that the inverse of a Cauchy matrix is given by 
$$
\left [ \frac{1}{a_i + a_j} \right ]_{1\leq i, j \leq N}^{-1} = \left [ \frac{a_ia_j\pi_i(N)\pi_j(N)}{a_i + a_j}\right ]_{1\leq i, j \leq N}, 
$$
and hence, 
\begin{equation}\label{eq:finitcaseppi}
P'^2(N) = I.
\end{equation}
By construction of $P_j(N)$ and $P_i^\top (N)$, we have that
$$
P^\top_i(N) P_j(N) = P'^\top_i(N)P'_j(N) = \delta_{ij},  
$$
where the last equality follows from~\eqref{eq:finitcaseppi}.

\xc{Convergence of $P_j(N)$.}  
Note that if $N \geq j$, then $\pi_j(N)$ has the same sign as $\pi_j$. Moreover, $|\pi_j(N)|$ is monotonically increasing in $N$, and it converges to $|\pi_j|$ as $N\to\infty$. Thus,  
\begin{equation*}\label{eq:convergenceofpin}
\lim_{N\to\infty} \pi_j(N) = \pi_j, \quad \mbox{for all } n \in \N.   
\end{equation*}
Now, for each $i\in \N$, we have that 
\begin{equation*}\label{eq:convergenceofPij}
|P_{ij}(N) - P_{ij}| = \frac{a_j}{a_i + a_j} |\pi_j(N) - \pi_j| <   |\pi_j(N) - \pi_j|. 
\end{equation*}
It follows that 
$$
\lim_{N\to\infty}\|P_j(N) - P_j\|_{Y} \leq  \|b\|_X \lim_{N\to\infty}|\pi_{j}(N) - \pi_{j}| = 0.
$$

\xc{Convergence of $P^\top_i(N)$.} 
Let $\tilde P_{i}^\top := (P_{ij}/{b_j})_{j \in \N}$. Note that $\tilde P_{i}^\top\in \ell^1$; indeed, 
$$
\|\tilde P_{i}^\top \|_{\ell^1} = \frac{1}{b_i}\sum_{j = 1}^\infty \phi_{ij}\pi_j \leq \frac{\kappa \|\pi\|_{\ell^\infty}}{b_i}.  
$$
Let $\tilde P_{ij}(N):= P_{ij}(N)/b_j$ and  $\tilde P_i^\top(N):= (\tilde P_{ij}(N))_{j \in \N}$. 
We show below that $\tilde P_i^\top(N)$ converges, in the $\ell^1$-sense, to $\tilde P_i^\top$. 
The arguments will be similar to those used in the proof of Proposition~\ref{prop:convergenceofkN}. 
Specifically, for any $\epsilon > 0$, there exist two positive integers $N'$ and $N_\epsilon$, with $N'\leq N_\epsilon$, such that
$$\sum_{j = N' + 1}^\infty |P_{ij}|/b_j < \epsilon / 2,$$ and, moreover, 
$$
0 < 1 - \frac{\pi_{j}(N)}{\pi_j} \leq \frac{\epsilon}{2\|\tilde P_i^\top \|_{\ell^1}} \quad \mbox{for all } j\leq N' \mbox{ and for all } N \geq N_\epsilon. 
$$
Since $\pi_j(N)$ and $\pi_j$ have the same sign for $N\geq j$ and since $|\pi_j(N)| \leq |\pi_j|$, we have that
$$
| \tilde P_{ij} - \tilde P_{ij}(N)| = \left [1 - \frac{\pi_{j}(N)}{\pi_j} \right ] \frac{|P_{ij}|}{b_j} \leq \frac{|P_{ij}|}{b_j} = |\tilde P_{ij}|, \quad \mbox{for } j\leq N. 
$$
The above arguments then imply that for any $N \geq N_\epsilon$,  
\begin{align}
\|\tilde P^\top_i - \tilde P_i^\top (N) \|_{\ell^1} & \leq \sum_{j = 1}^{N'} \left [1 - \frac{\pi_{j}(N)}{\pi_j} \right ] \frac{|P_{ij}|}{b_j}  + \sum_{j = N' + 1}^{\infty} \left [1 - \frac{\pi_{j}(N)}{\pi_j} \right ] \frac{|P_{ij}|}{b_j}  \notag\\
& \leq \frac{\epsilon}{2\|\tilde P^\top_i\|_{\ell^1}} \sum_{j = 1}^{N'} \frac{|P_{ij}|}{b_j} +  \epsilon/2 < \epsilon/2 + \epsilon/2 = \epsilon. \notag
\end{align}
Thus, $\lim_{N\to\infty} \|\tilde P^\top_i(N) - \tilde P_i^\top\|_{\ell^1}  = 0$  
and hence, 
$$
\lim_{N\to\infty} \|P^\top_i(N) -  P_i^\top\|_{Y^*}= 0. 
$$
We conclude that
$$
P^\top_i P_j = \lim_{N\to\infty} P^\top_i(N) P_j(N) = \delta_{ij}.
$$
This completes the proof. 
\end{proof}

Recall from~\eqref{eq:spectrumoftildeT} that $\Sigma_{\rm disc}(\tilde T) = \{-a_n \mid n\in \N\}$. Since the $a_n$'s are pairwise distinct, the algebraic/geometric multiplicity of each $-a_n$ is~$1$. 
Let $\tilde T^*: Y^*\to Y^*$ be the adjoint of~$T$. 
The following result characterizes the eigenvectors of $T$ and of $T^*$:   

\begin{Proposition}\label{prop:eigenvectorsofP}
Let $P_j$ and $P_i^\top$ be the $j$th column and the $i$th row of $P$, viewed as elements of $Y$ and of $Y^*$, respectively. Then, the following hold:
\begin{enumerate}
    \item Each $P_j$ is an eigenvector of $\tilde T$ corresponding to $-a_j$.
    \item Each $P_i^\top$ is an eigenvector of $\tilde T^*$ corresponding to $-a_i$. 
\end{enumerate}
\end{Proposition}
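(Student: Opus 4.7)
The plan is to verify both eigenvalue relations by direct computation, after first establishing the key identity
\[
P_i^\top \bfo \;=\; \sum_{m=1}^\infty \frac{a_m \pi_m}{a_i + a_m} \;=\; 1, \quad \mbox{for every } i \in \N.
\]
Note that $\bfo \in Y$ since $\|\bfo\|_Y = \|b\|_X < \infty$, and $P_i^\top \in Y^*$ by Proposition~\ref{prop:ppi}, so the series converges absolutely and the duality pairing is well defined.

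To prove the identity, I would argue through the finite-$N$ approximants $\pi_j(N)$ introduced in the proof of Proposition~\ref{prop:ppi}. Consider the rational function $f_N(z) := \prod_{m=1}^N \frac{z + a_m}{z - a_m}$, which has simple poles at $a_1,\ldots,a_N$, tends to $1$ as $|z|\to\infty$, and whose residue at $a_n$ equals $2 a_n \prod_{m \neq n,\, m \leq N} \frac{a_n + a_m}{a_n - a_m} = a_n \pi_n(N)$. Its partial fraction expansion is therefore $f_N(z) = 1 + \sum_{n=1}^N \frac{a_n \pi_n(N)}{z - a_n}$. Substituting $z = -a_i$ for any $i \in \{1,\ldots,N\}$ yields $f_N(-a_i) = 0$ (the numerator factor $z + a_i$ vanishes), hence $\sum_{n=1}^N \frac{a_n \pi_n(N)}{a_i + a_n} = 1$, i.e., $P_i^\top(N)\bfo = 1$. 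Passing to the limit $N \to \infty$ using the convergence $\|P_i^\top(N) - P_i^\top\|_{Y^*} \to 0$ established in the proof of Proposition~\ref{prop:ppi} then gives $P_i^\top \bfo = 1$.

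With the identity in hand, direct computation settles both items. Recall $\tilde T = A + \bfo\,\tilde k$ with $\tilde k_m = -a_m \pi_m$. For item~1, the $i$th entry of $\tilde T P_j$ equals
\[
a_i P_{ij} + \tilde k P_j \;=\; \frac{a_i a_j \pi_j}{a_i + a_j} \;-\; a_j \pi_j \sum_{m=1}^\infty \frac{a_m \pi_m}{a_j + a_m} \;=\; \frac{a_i a_j \pi_j - a_j \pi_j(a_i + a_j)}{a_i + a_j} \;=\; -a_j P_{ij},
\]
where the identity has been applied to the inner sum. For item~2, taking adjoints gives $\tilde T^* \phi = A^* \phi + \langle \phi, \bfo\rangle\,\tilde k$ for $\phi \in Y^*$, and $A^*$ acts entrywise by multiplication by $a_m$. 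Applied to $\phi = P_i^\top$, the $m$th entry becomes
\[
a_m P_{im} + (P_i^\top \bfo)\,\tilde k_m \;=\; \frac{a_m^2 \pi_m}{a_i + a_m} - a_m \pi_m \;=\; -a_i P_{im},
\]
again by the identity. The main obstacle is the limit-passage step for $\sum_m \frac{a_m \pi_m}{a_i + a_m} = 1$; once this is known, the two eigenvector relations reduce to elementary algebra. The $Y^*$-norm convergence of $P_i^\top(N)$ proven in the preceding proposition makes this transfer immediate.
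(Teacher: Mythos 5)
Your proof is correct, and it inverts the logical order of the paper's argument in an interesting way. The paper obtains item~1 almost for free: since $\Sigma(\tilde T)=\{-a_n\mid n\in\N_0\}$ is already known from Theorem~\ref{thm:main1} and $\tilde T$ is compact, $-a_j$ is an eigenvalue, and the entry-wise eigenvector equation $(a_i+a_j)v_{ij}+\tilde k v_j=0$ forces any eigenvector to be proportional to $(1/(a_i+a_j))_i\propto P_j$; the identity $\sum_m a_m\pi_m/(a_i+a_m)=1$ (the paper's~\eqref{eq:pibfoisone}) is then extracted as a \emph{byproduct} of substituting $P_j$ back in, and item~2 follows by the same direct computation you give. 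You instead prove the identity \emph{first} and independently, via the partial-fraction expansion of $f_N(z)=\prod_{m=1}^N(z+a_m)/(z-a_m)$ — whose residue at $a_n$ is indeed $a_n\pi_n(N)$ and which vanishes at $z=-a_i$ — followed by the limit $N\to\infty$ using the $Y^*$-convergence of $P_i^\top(N)$ established inside the proof of Proposition~\ref{prop:ppi}; both eigenvector relations then reduce to the elementary algebra you display, which matches the paper's computations exactly. What your route buys is self-containedness: it does not presuppose that $-a_j$ lies in the spectrum of $\tilde T$ with an eigenvector in $Y$, and it gives a constructive, residue-theoretic proof of~\eqref{eq:pibfoisone}. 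What the paper's route buys is brevity, since the spectral information is already available at that point in the development. One cosmetic remark: the membership $P_i^\top\in Y^*$ is established \emph{within} the proof of Proposition~\ref{prop:ppi} (via $\tilde P_i^\top\in\ell^1$) rather than being its statement, so you should cite that step rather than the proposition itself; this does not affect the validity of your argument.
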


\begin{proof} We establish below the two items.

\xc{Proof of item~1.} 
Let $v_j  \in Y$ be an eigenvector of $\tilde T = A + \bfo \tilde k$ corresponding to $-a_j$. 
Let $v_{ij}$ be the $i$th entry of $v_j$. 
Then, entry-wise, $v_j$ satisfies the following: 
\begin{equation}\label{eq:eigenvectorPj}
(a_i + a_j) v_{ij} + \tilde k v_j = 0, \quad \mbox{for all } i\in \N, 
\end{equation}
which implies that $v_j \propto P_j$. This establishes item~1. Note that if we replace $v_j$ with $P_j$ in~\eqref{eq:eigenvectorPj}, then we obtain
$$
0 = (a_i + a_j) P_{ij} + \tilde k P_j =  a_j \pi_j \left [ 1 - \sum_{i = 1}^\infty \frac{a_i\pi_i}{a_i + a_j} \right ] = 0,
$$
and hence,
\begin{equation}\label{eq:pibfoisone}
\sum_{i = 1}^\infty \frac{a_i\pi_i}{a_i + a_j} = 1, \quad \mbox{for all } j \in \N.
\end{equation}

\xc{Proof of item~2.} Note that for the case $X = \ell^\infty$, elements of $Y^*$ may not be represented by sequences. Thus, for this item, we verify  by computation that 
$\tilde T^* P_i^\top = - a_i P_i^\top$. 
By~\eqref{eq:pibfoisone}, we have that $P^\top_i \bfo = 1$ for all $i\in \N$. 
Thus,   
$$
\tilde T^* P_i^\top  = P^\top_i A + (P^\top_i\bfo) \tilde k   = \left ( \frac{a_j^2 \pi_j}{a_i + a_j} \right )_{j\in \N} - \left ( a_j \pi_j \right )_{j\in \N}  = - a_i \left ( \frac{a_j \pi_j}{a_i + a_j} \right )_{j\in \N} = -a_i P^\top_i.
$$
This completes the proof. 
\end{proof}

The above results then lead to the following:

\begin{Corollary}\label{cor:diagonalizable}
We have that $P \tilde T P^{-1} = - A$.
\end{Corollary}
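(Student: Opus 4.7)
By Proposition~\ref{prop:ppi}, $P^2 = I$, so $P$ is invertible with $P^{-1} = P$. Hence the claim $P\tilde T P^{-1} = -A$ reduces to showing the operator identity $\tilde T P = -PA$ on $Y$: indeed, once this is established, multiplying on the left by $P$ yields $P\tilde T P = -P^2 A = -A$.

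To establish $\tilde T P = -PA$, I first verify it on the standard basis. For $j\in\N$, let $e_j\in Y$ denote the sequence with $1$ in the $j$th entry and $0$ elsewhere. Then $P e_j = P_j$, the $j$th column of $P$, and $A e_j = a_j e_j$. Item~1 of Proposition~\ref{prop:eigenvectorsofP} gives $\tilde T P_j = -a_j P_j$, so
\begin{equation*}
\tilde T P e_j \;=\; -a_j P e_j \;=\; -P(a_j e_j) \;=\; -PA e_j, \quad \text{for every } j\in\N.
\end{equation*}
Thus $\tilde T P$ and $-PA$ agree on the linear span of $\{e_j : j\in\N\} = c_{00}$.

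For $X\in\{c_0,\ell^p : 1\leq p < \infty\}$, $c_{00}$ is dense in $X$, and hence dense in $Y$ via the bounded isomorphism $B^{-1}:X\to Y$. Since $\tilde T P$ and $-PA$ are both bounded operators (by Proposition~\ref{prop:pwelldefined} and boundedness of $\tilde T$), the identity extends to all of $Y$ by continuity.

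The main obstacle is the case $X\in\{c,\ell^\infty\}$, where $c_{00}$ is not dense in $Y$, so density alone does not conclude the argument. Here I would verify the identity entry-wise for an arbitrary $y\in Y$ by direct computation. Using the relation~\eqref{eq:pibfoisone}, namely $\sum_i a_i\pi_i/(a_i + a_j) = 1$, to evaluate $\tilde k (Py) = -\sum_j a_j \pi_j y_j$, a short calculation gives
\begin{equation*}
(\tilde T P y)_i \;=\; a_i (Py)_i + \tilde k(Py) \;=\; -\sum_{j=1}^\infty \frac{a_j^2 \pi_j}{a_i + a_j} y_j \;=\; (-PA y)_i.
\end{equation*}
The Fubini-type interchange of summations is justified because $\Phi$ decays spatially exponentially and $\pi\in\ell^\infty$, which together furnish absolute convergence of the relevant double sums. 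Once $\tilde T P = -PA$ is proved on all of $Y$, the conclusion $P\tilde T P^{-1} = -A$ follows as in the first paragraph.
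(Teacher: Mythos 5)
Your proof is correct, but it routes around the paper's intended shortcut and in doing so re-derives something the paper has already packaged as a proposition. The paper's one-line proof cites both Propositions~\ref{prop:ppi} and~\ref{prop:eigenvectorsofP}, and the role of item~2 of Proposition~\ref{prop:eigenvectorsofP} is precisely to dispose of the obstacle you flag: since each row $P_i^\top$ is an eigenvector of $\tilde T^*$ with eigenvalue $-a_i$, one has $P_i^\top(\tilde T y) = -a_i\,P_i^\top y$ for \emph{every} $y\in Y$, i.e.\ $(P\tilde T y)_i = -a_i (Py)_i = (-APy)_i$ entrywise. This gives the row identity $P\tilde T = -AP$ on all of $Y$ at once, with no density needed, and then $P\tilde T P^{-1} = -APP^{-1} = -A$ by $P^2 = I$. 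Your argument instead uses item~1 (the column identity $\tilde T P_j = -a_jP_j$), which only pins down $\tilde T P = -PA$ on $\overline{c_{00}}$, forcing the case split; your fallback entrywise computation for $X\in\{c,\ell^\infty\}$ is essentially the same calculation that proves item~2 of Proposition~\ref{prop:eigenvectorsofP}, so nothing is wrong, just duplicated. One small point on your justification of the summation interchange: spatial exponential decay of $\Phi$ and $\pi\in\ell^\infty$ suffice, but the chain of estimates also passes through $(a_n/b_n)\in\ell^1$ (established in the first step of the proof of Theorem~\ref{thm:sufficientnew}), which is worth citing explicitly since $\sum_n a_n\pi_n(Py)_n$ is controlled by $\|\pi\|_{\ell^\infty}^2\,\kappa\,\|y\|_Y\sum_n a_n/b_n$.
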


\begin{proof}
This corollary follows directly from Propositions~\ref{prop:ppi} and~\ref{prop:eigenvectorsofP}. 
\end{proof}

\subsubsection{Proof of the two items of Theorem~\ref{thm:sufficientnew}}\label{ssec:stability}
Let $z(t):= Py(t) \in Y$. By~\eqref{eq:systemfory} and by Corollary~\ref{cor:diagonalizable}, we have that
\begin{equation}\label{eq:thezsystem}
\dot z(t) = -A z(t)
\end{equation} 
We call~\eqref{eq:thezsystem} the {\em $z$-system}, with $Y$ the state space. 
By Proposition~\ref{prop:ppi}, $P^2 = I$ and hence,  
$y(t) = P z(t)$. By Proposition~\ref{prop:pwelldefined}, $P$ is a bounded operator. Thus, 
$$
\|y(t)\|_Y \leq \|P\|_{\mathcal{B}(Y)} \|z(t)\|_Y \quad \mbox{and} \quad 
\|z(t)\|_Y \leq \|P\|_{\mathcal{B}(Y)} \|y(t)\|_Y.
$$
Consequently, the original system~\eqref{eq:reproducek-a}, the $y$-system, and the $z$-system all share the same stability properties. 
In the sequel, we focus on the $z$-system.

\xc{Proof that the $z$-system is stable.}  
From~\eqref{eq:thezsystem}, we have that 
\begin{equation}\label{eq:thesolutionforzsystem}
z_n(t) = e^{-a_n t} z_n(0), \quad \mbox{for all } n\in \N, 
\end{equation}
which implies that 
\begin{equation*}\label{eq:boundforzt0}
\|z(t)\|_{Y} \leq \|z(0)\|_{Y}, \quad \mbox{for all } t\geq 0.
\end{equation*}
This establishes stability of the $z$-system~\eqref{eq:thezsystem}. 

\xc{Proof of item~1 of Theorem~\ref{thm:sufficientnew}.} We will now assume that 
\begin{equation}\label{eq:Xisconvergent}
X = c_0 \quad \mbox{or} \quad X = \ell^p, \quad \mbox{for } 1\leq p < \infty,
\end{equation}
and show that the $z$-system is asymptotically stable.  
We do so by showing that for any $\epsilon > 0$, there exists a time $T_{\epsilon}$ such that 
$\|z(t)\|_Y \leq \epsilon$ for all $t \geq T_{\epsilon}$. 
Let $z(t; N)$ be an eventually zero sequence defined as follows: 
$$
z_n(t;N) := 
\begin{cases}
z_n(t) & \mbox{if } 1\leq n \leq N, \\
0 & \mbox{otherwise}.  
\end{cases}
$$
By~\eqref{eq:Xisconvergent}, we have that 
$\lim_{N\to\infty}\|z(0; N) - z(0)\|_Y = 0$. 
Thus, by~\eqref{eq:thesolutionforzsystem}, there exists an $N_\epsilon$ such that 
\begin{equation}\label{eq:ztlessthanepsilon2}
\|z(t; N_\epsilon) - z(t)\|_Y < \epsilon/2, \quad \mbox{for all } t \geq 0.
\end{equation}
Now, let $\tau_{\epsilon}\geq 0$ be such that 
\begin{equation}\label{eq:definetepsilon}
\exp(-a_{N_\epsilon} \tau_{\epsilon}) \|z(0)\|_Y < \frac{\epsilon}{2 }.
\end{equation}
Since $(a_n)$ is monotonically decreasing, for fixed $t \geq 0$ the sequence $(\exp(- a_n t))$ is monotonically increasing. Thus, for $t \geq \tau_{\epsilon}$, 
\begin{align}\label{eq:truncatezforNepsilon}
\|z(t; N_\epsilon)\|_Y & \leq \exp(-a_{N_\epsilon} t) \|z(0; N_\epsilon)\|_Y  \notag \\ 
& \leq \exp(-a_{N_\epsilon} \tau_{\epsilon}) \|z(0; N_\epsilon)\|_Y \notag \\
& \leq \exp(-a_{N_\epsilon} \tau_{\epsilon}) \|z(0)\|_Y < \epsilon/2,
\end{align}
where the last inequality follows from~\eqref{eq:definetepsilon}.  
Combining~\eqref{eq:ztlessthanepsilon2} and~\eqref{eq:truncatezforNepsilon}, we obtain that for $t \geq \tau_{\epsilon}$, 
$$
\|z(t)\|_Y   \leq \|z(t; N_\epsilon)\|_Y + \|z(t) - z(t; N_\epsilon)\|_Y  < \epsilon/2 + \epsilon/2 = \epsilon.
$$
This establishes item~1 of Theorem~\ref{thm:sufficientnew}.  

\xc{Proof of item~2 of Theorem~\ref{thm:sufficientnew}.} We will now assume that $X = \ell^\infty$ or $X = c$, and show that the $z$-system is not asymptotically stable. Let $z(0):= (1/b_n)$. Note that $Bz(0) = \bfo \in X$, so $z(0)\in Y$. By~\eqref{eq:thesolutionforzsystem}, we have that
$$
b_n z_n(t) = e^{-a_nt}. 
$$
For each $n\in \N$, the minimum time $T_n$ such that $b_n z_n(t) \leq \epsilon < 1$ is given by
$T_n = - \ln \epsilon /a_n$. But then, $(T_n)$ is not bounded since $(a_n)$ converges to $0$.  

This completes the proof of Theorem~\ref{thm:sufficientnew}. \hfill{\qed}

\subsection{Proof of Theorem~\ref{thm:sufficient}}
For ease of presentation, we again assume that all the $b_n$'s are positive. We reproduce below the hypothesis of the theorem:
\begin{equation}\label{eq:condforabstab}
 a_{n+1}/a_n < \nu_0 \quad \mbox{and} \quad  \nu_1 < b_{n+1}/b_n < \nu_2, \quad \mbox{for all } n \in \N,  
\end{equation}
where $0 < \nu_0 < \nu_1 < \nu_2 < 1$. 
Under this hypothesis, we will first show that $\pi \in \ell^\infty$, and then show that $\Phi$ spatially exponentially decays. 

\subsubsection{Proof that $\pi\in \ell^\infty$}\label{sssec:pilinfinity}
We show that $(\ln|\pi_n|)\in \ell^\infty$. From the definition~\eqref{eq:defpi} of $\pi_n$,  
$$
\ln |\pi_n| = \ln 2 + \sum_{m = 1, m\neq n}^{\infty}\ln \left |\frac{a_m/a_n + 1}{a_m/a_n - 1}\right |.
$$
We provide below upper bounds for the addends in the above summation. Consider two cases: $m < n$ and $m > n$.   

\xc{Case 1: $m < n$.} Note that $a_m/a_n \geq \nu_0^{m-n} > 1$ and hence,  
$$
    1 < \frac{a_m/a_n + 1}{a_m/a_n - 1} \leq \frac{\nu_0^{m - n} + 1}{\nu_0^{m - n} - 1} = 1 + \frac{2\nu_0^{n - m}}{1 - \nu_0^{n - m}} \leq 1 + \frac{2\nu_0^{n - m}}{1 - \nu_0}.
$$
It follows that
$$
    \sum_{m = 1}^{n-1}\ln \frac{a_m/a_n + 1}{a_m/a_n - 1} \leq \sum_{m = 1}^{n-1} \ln \left [ 1 + \frac{2\nu_0^{n - m}}{1 - \nu_0} \right ] < \frac{2}{1 - \nu_0}\sum_{m = 1}^{n-1} \nu_0^{n - m} < \frac{2\nu_0}{(1 - \nu_0)^2},
$$
where the second inequality follows from the fact that $\ln(1 + x) < x$, for $x > 0$.

\xc{Case 2: $m > n$.} In this case, we have that $a_m/a_n \leq \nu_0^{m-n} < 1$ and  
$$
    1 < \frac{1 + a_m/a_n}{1 - a_m/a_n} \leq \frac{1 + \nu_0^{m - n} }{1 - \nu_0^{m - n}} < 1 + \frac{2\nu_0^{m - n}}{1-\nu_0}. 
$$
Thus,
\begin{align*}
    \sum_{m = n+1}^{\infty}\ln \frac{1 + a_m/a_n}{1 - a_m/a_n} & \leq \sum_{m = n + 1}^{\infty} \ln \left [ 1 + \frac{2\nu_0^{m - n}}{1 - \nu_0} \right ] \\
    & < \frac{2}{1 - \nu_0}\sum_{m = n + 1}^\infty \nu_0^{m - n} = \frac{2\nu_0}{(1 - \nu_0)^2}.
\end{align*}

Combining the arguments for the two cases, we conclude that for all $n\in \N$,
\begin{equation*}\label{eq:boundforpi}
     \ln |\pi_n| = \ln 2 + \sum_{m = 1}^{n-1}\ln \frac{a_m/a_n + 1}{a_m/a_n - 1}  + \sum_{m = n+1}^{\infty}\ln \frac{1 + a_m/a_n}{1 - a_m/a_n} < \ln 2 + \frac{4\nu_0}{(1 - \nu_0)^2}.
\end{equation*}
This completes the proof. \hfill{\qed}

\subsubsection{Proof that $\Phi$ spatially exponentially decays}\label{sssec:spatialexponentialdecay}
Let $\mu := \max\{\nu_0/\nu_1, \nu_2\}$. By~\eqref{eq:condforabstab}, we have that $\mu < 1$. We show below that
\begin{equation}\label{eq:decayboundforPhi}
\phi_{ij} \leq \mu^{|i - j|}, \quad \mbox{for all } i,j\in \N.
\end{equation}
Note that if $i = j$, then $\phi_{ii} = 1/2 < 1$, so~\eqref{eq:decayboundforPhi} is satisfied. For $i\neq j$,  we consider  two cases: $i< j$ and $i > j$.

\xc{Case 1: $i < j$.} We have that
$$
\phi_{ij} = \frac{b_i/b_j}{1 + a_i/a_j} < \frac{b_i/b_j}{a_i/a_j} \leq \left (\frac{\nu_0}{\nu_1}\right )^{j - i} \leq \mu^{j - i}.
$$

\xc{Case 2: $i > j$.} In this case, 
$$
\phi_{ij} = \frac{b_i/b_j}{1 + a_i/a_j} < b_i/b_j \leq \nu_2^{i - j} \leq \mu^{i - j}.
$$
This completes the proof. \hfill{\qed}

\section{Conclusion}
In this paper, we have addressed the problem of pole placement and the problem of feedback stabilization for the following discrete linear ensemble system: $\dot x_n(t) = a_n x_n(t) + b_n u(t)$, where the state space $X$ is either $c$, or, $c_0$, or, $\ell^p$ for $1 \leq p \leq \infty$. 
We have established several necessary or sufficient conditions on the sequences $(a_n)$ and $(b_n)$ for the existence of a feedback gain $k\in X^*$ such that the operator $T_k = (A + bk): X\to X$ has its spectra in the left half plan and for the feedback system $\dot x(t) = T_k x(t)$ to be (asymptotically) stable. These conditions have been formulated as theorems in Section~\S\ref{sec:mainresult} and their proofs have been presented in Section~\S\ref{sec:proofs}.   
These two problems have natural connections with rank-one  perturbations of (compact) operators and with stability of uniformly continuous semigroups. The main results of this paper may be of independent interest in operator theory as well.   
\printbibliography
\end{document}